\newtheorem{theorem}{Theorem}[section]
\newtheorem{remark}{Remark}[section]
\numberwithin{equation}{section}
\newcommand{\norm}[1]{\left\Vert#1\right\Vert}
\newcommand{\bu}{{\bf u}}
\newcommand{\bv}{{\bf v}}
\newcommand{\bw}{{\bf w}}
\newcommand{\bx}{{\bf x}}
\newcommand{\bff}{{\bf f}}
\newcommand{\bfg}{{\bf g}}
\newcommand{\bfX}{{\bf X}}
\newcommand{\bfV}{{\bf V}}
\title{Stokes with variable viscosity}
\renewcommand\expandafter\subsection\expandafter{%
		\expandafter\@fb@secFB\subsection
	}%
\date{}
\title{Long time stability of a linearly extrapolated blended BDF scheme for multiphysics flows }
\author{
	Aytekin \c{C}{\i}b{\i}k
	\thanks{Department of Mathematics, Gazi University, 06550 Ankara, Turkey; abayram@gazi.edu.tr}
	\and
Fatma G. Eroglu
\thanks{Department of Mathematics, Middle East Technical University, 06800 Ankara, Turkey, Department of Mathematics, Faculty of Science,  Bart{\i}n University, 74110 Bart{\i}n, Turkey; fguler@bartin.edu.tr}
\and
Songul Kaya
\thanks{
Department of Mathematics, Middle East Technical University, 06800 Ankara, Turkey; smerdan@metu.edu.tr.}
}
\begin{document}

\maketitle

\begin{abstract}
This paper investigates the long time stability behavior of multiphysics flow problems, namely the Navier-Stokes equations, natural convection and double-diffusive convection equations with an extrapolated blended BDF time-stepping scheme. This scheme combines the two-step BDF and three-step BDF time stepping schemes. We prove unconditional long time stability theorems for each of flow systems. Various numerical tests are given for large time step sizes in long time intervals in order to support theoretical results.
\end{abstract}

Keywords: blended BDF, long time stability, Navier-Stokes, natural convection, double-diffusive


\section{Introduction}

Most of the engineering and applied science problem involves the combination of some different physical problems such as fluid flow, heat transfer, mass transfer, magnetic and electricity effect. These kinds of problems are mostly referred as multiphysics problems. From a mathematical point of view, these problems yield systems of coupled single physics equations. In our case, many important applications require the accurate solution of multiphysics coupling with Navier-Stokes equations. Since the simulation of Navier-Stokes equations has its own difficulties, the coupling between involved equations yield more complex problems. One possibility of improving numerical simulations is to develop algorithms which are reliable and robust. In addition, designed numerical schemes should capture the long time dynamics of the system in a right way. Thus, it is of practical interest to have an algorithm which is stable over the required long time intervals.

In recent years, considerable amount of effort has been spent to understand the long time behavior of the numerical  schemes for multiphysics problems. For such works, we refer to \cite{mr6,mr1,mr2,mr5,mr3,mr4}. In particular, for Navier-Stokes equations, the Crank-Nicholson  in \cite{hr86,HR4,mr2}, the implicit Euler  in \cite{mr3}, two-step Backward Differentiation (BDF2) in \cite{minelong}  and  fractional step/projection methods in \cite{sa94} are chosen as temporal discretization  in order to show the long time stability. In this respect, an extrapolated two step BDF scheme for a velocity-vorticity form of Navier-Stokes equations have been investigated in \cite{heist}. The long time stability of partitioned methods for the fully evolutionary Stokes-Darcy problem in \cite{l13}, for the time dependent MHD system in \cite{l14,mr5} and for the double-diffusive convection in \cite{F15} were also established based on implicit-explicit and backward differentiation schemes.

This study concerns the behavior of the solutions of multiphysics problems for longer time simulations and time step restrictions on these solutions. In order to solve the systems numerically, a finite element in space discretization is employed along with a rather new time stepping approach so called an extrapolated blended Backward Differentiation (BLEBDF) temporal discretization. Such scheme combines BDF2 and a three-step BDF method in order to not only preserve $A$-stability and second order accuracy but also have a smaller constant in truncation error terms, \cite{vatsa}. Along with the mentioned time-stepping strategy, the three-step extrapolation is used in order to linearize the convective terms in the system of PDE's. Thus, the solution of only one linear system of equations is encountered at each time step which reduces the compilation time and memory cost in simulations.

In \cite{R15}, Ravindran considers the stability and convergence of a double diffusive convection system with the blended BDF scheme in short time intervals. The current study attempts to extend the works above to study the notion of long time stability by combining the BLEBDF idea and its effects on several multiphysics flows such as Navier-Stokes, natural convection and double-diffusive convection. In this work, we will provide the unconditonal long time $L^2$ stability property of BLEBDF method for each of flow systems, when they are discretized spatially with finite element method. To the best of authors' knowledge, this is the first study that investigates the long time stability of the finite element solutions of multiphysics flows involving a blended BDF time-stepping approach along with a third order linear extrapolation idea.

The plan of the paper is as follows. In Section 2, we state the notations with some mathematical preliminaries. Section 3 is reserved for proving the unconditional long time stability of NSE under the employment of extrapolated blended BDF temporal discretization along with numerical experiments. Similar results are established for natural convection and double-diffusive convection equations in Section 4 and Section 5, respectively. Finally, we state some conclusion remarks in the last section.

\section{Notations and Preliminaries}

Let $\Omega \subset \mathbb{R}^d,\,d\in\{2,3\}$, be open, connected domain bounded by Lipschitz boundary $\partial \Omega$. Throughout the paper standard notations for Sobolev spaces and their norms will be used, c.f. Adams \cite{A75}.  The norm in $(H^k(\Omega))^d$ is denoted by $\|\cdot\|_k$ and the norm in Lebesgue spaces $(L^p(\Omega))^d$, $1\leq p < \infty$, $p\neq 2$ by $\|\cdot\|_{L^p}$ and $p=\infty$ by $\|\cdot\|_{\infty}$ . The space $L^2(\Omega)$ is equipped with the norm and inner product $\|\cdot\|$ and $(\cdot, \cdot)$, respectively, and for these we drop the subscripts. Vector-valued functions will be identified by bold face. The norm in dual space  ${H}^{-1}$ of ${H}^1_0(\Omega)$ is denoted by $\norm{\cdot}_{-1}$. The continuous velocity, pressure, temperature and concentration spaces are denoted by
\begin{eqnarray*}
	\bfX&:=& ({\bf H}^1_0(\Omega))^d, \ Q:=L_0^2(\Omega), \ W:={H}^1_0(\Omega), \ \Psi:={H}^1_0(\Omega),
\end{eqnarray*}  and the divergence free space
\[
\bfV= \lbrace \bv \in \bfX: (\nabla \cdot \bv, q)=0, \forall q\in Q\rbrace.
\]
We recall also the Poincar\'e-Friedrichs inequality as
$$\|\bv\|\leq C_P \|\nabla v\|,\quad \forall \bv \in \bfX.$$
For each multiphysics flow problems, we consider a regular, conforming family $\Pi^h$ of triangulations of domain with maximum diameter $h$  for spatial discretization. Assume  ${\bfX_h}\subset {\bfX}, Q_h\subset Q, W_h\subset W$ and $\Psi_h\subset \Psi$ be finite element spaces such that the spaces $(\bfX_h,Q_h)$ satisfy the discrete inf-sup condition needed for stability of the discrete pressure, \cite{GR79}.  The discretely divergence free space for $ (\bfX_h,Q_h)$ pairs is given by
\begin{equation}
\bfV_h= \lbrace \bv_h \in \bfX_h: (\nabla \cdot \bv_h, q_h)=0, \forall q_h \in Q_h\rbrace.
\end{equation}
The dual spaces of $\bfV_h$, $W_h$ and $\Psi_h$ are given by $\bfV_h^*$, $W_h^*$ and $\Psi_h^*$, and their norms are denoted by $\norm{\cdot}_{ \bfV_h^*}$, $\norm{\cdot}_{W_h^*}$ and  $\norm{\cdot}_{\Psi_h^*}$   respectively.
We also  need  the following space in the analysis
\begin{eqnarray}
L^{\infty}(\mathbb{R}_+,\bfV_h^*):=\{\bff:\Omega^d\times \mathbb{R}_+\rightarrow\mathbb{R}^d, \exists \, K <\infty,\, a.e. \,\, t>0,\,\, \|\bff(t)\|_{\bfV_h^*}<K\}.
\end{eqnarray}
Similar spaces for $W_h^*$ and $\Psi_h^*$ will be used throughout the analysis. To simplify the analysis, we utilize the $G$-stability framework as in \cite{HW02}. For third order backward differentiation, the positive definite matrix $G$-matrix and the associated norm  can be obtained as
\[ G=\dfrac{1}{12}\left( \begin{array}{ccc}
19& -12 &3 \\
-12& 10& -3\\
3 & -3 & 1\end{array} \right), \quad  \|\mathbf{\mathcal{W}}\|^2_{G}=(\mathcal{W},G\mathcal{W}). \quad \mathcal{W}\in (L^2(\Omega))\]
It is easy to see the $G$-norm and $L^2$ norm are equivalent in the sense that there exist $0 <C_l<C_u $ positive constants such that
\begin{eqnarray}
\begin{array}{rclll}
C_l\| \mathcal{W}\|_G^2&\leq& \| \mathcal{W}\|^2&\leq& C_u \| \mathcal{W}\|_G^2.\label{eqv}
\end{array}
\end{eqnarray}
The following relation is well known (see, e.g. \cite{R15}) for $\forall \bw^j \in L^2(\Omega)$, ${ \bf \mathcal{W}}_{n+1}=[\bw_h^{n+1} \quad \bw_h^{n} \quad \bw_h^{n-1}]^\top$ and ${ \bf \mathcal{W}}_{n}=[\bw_h^{n} \quad \bw_h^{n-1} \quad \bw_h^{n-2}]^\top$.
	\begin{eqnarray}
\left(\frac{5}{3}\bw_h^{n+1}-\frac{5}{2}\bw_h^n+\bw_h^{n-1}-\frac{1}{6}\bw_h^{n-2},\bw_h^{n+1}\right)
&=&\|{ \bf \mathcal{W}}_{n+1}\|_G^2-\| { \bf \mathcal{W}}_{n}\|_G^2\nonumber\\&&+\dfrac{1}{12}\|\bw_h^{n+1}-3\bw_h^{n}+3\bw_h^{n-1}-\bw_h^{n-2}\|^2.\label{ey}
\end{eqnarray} 	

\section{Long time stability of NSE with BLEBDF }

The goal of this section is to show that our scheme is unconditionally long-time stable. That is, the solutions remain bounded without any time step restriction. We first study an extrapolated blended BDF method for discretizing the incompressible NSE: 
\begin{equation}\label{nse1}
\begin {array}{rcll}
\bu_t -\nu \Delta \bu+ (\bu\cdot\nabla)\bu + \nabla p &=& \bff  &
\mathrm{in }\quad  \Omega, \\
\nabla \cdot \bu&=& 0& \mathrm{in } \quad \Omega,\\
\bu&=& \mathbf{0}& \mathrm{on } \quad \partial \Omega,\\
\bu(0,\bx) & = & \bu_0 & \mathrm{in }\quad \Omega, \\
\displaystyle \int_{\Omega}p \ d \bx&=&0.
\end{array}
\end{equation}
where $\bu$ is the velocity field, $p$ is the fluid pressure, $\nu$ is the kinematics viscosity and
$\bff$ denotes the body forces. The variational formulation of (\ref{nse1}) reads as follows:
Find $(\bu,p) \in (\bfX,Q)$ satisfying
\begin{equation}\label{weaknse}
\begin {array}{r@{}l}
(\bu_t,\bv) +\nu(\nabla{\bu},\nabla{\bv}) + b_1(\bu,\bu,\bv) -(p,\nabla \cdot \bv)&{}=(\bff,\bv), \\
{(\nabla \cdot \bu,q)}&{}=0,
\end{array}
\end{equation}
for all $ (\bv,q) \in (X,Q)$, where
\begin{eqnarray}
b_1(\bu,\bv,\bw) &:=&
\frac{1}{2}\left(((\bu\cdot\nabla)\bv,\bw)-((\bu\cdot\nabla)\bw,\bv)\right)
\end{eqnarray}
represents the skew-symmetric form of the convective term. Note that the convective term has the well known property
\begin{equation}
b_1(\bu,\bv,\bv)=0 \label{s1}
\end{equation}
for all $\bu,\bv\in \bfX$, which simplifies the analysis.

The studied time discretization method uses  different time discretizations for different terms. The scheme discretizes in time via a BDF2 and BDF3 whereas the nonlinear terms are treated via a third-order extrapolation formula. One consequence of lagging the nonlinear term to previous time levels is to avoid solving nonlinear equations. A class of this type of blending BDF schemes which are also known as a class of optimized second-order BDF schemes are proposed in \cite{N03,vatsa}. We now consider one of the special case of this family proposed in \cite{vatsa}, with an error constant half as large as BDF2 scheme.

Based on the weak formulation (\ref{nse1}), the fully discrete approximation of it reads as follows. Given $\bf f$,
$\bu_h^0=\bu_h^{-1}=\bu_h^{-2}=\bu_0$, for any time step $\Delta t>0$,  find  $(\bu_h^{n+1},p_h^{n+1})\in (\bfX_h,Q_h)$ such that for $n\geq 0$
\begin{eqnarray}
\left(\dfrac{\frac{5}{3}\bu_h^{n+1}-\frac{5}{2}\bu_h^n+\bu_h^{n-1}-\frac{1}{6}\bu_h^{n-2}}{\Delta t},\bv_h\right) + \nu(\nabla{\bu_h^{n+1}},\nabla{\bv_h})&+&b_1(3\bu_h^n-3\bu_h^{n-1}+\bu_h^{n-2},\bu_h^{n+1},\bv_h)
 \nonumber\\
-(p_h^{n+1},\nabla \cdot \bv_h)&=&(\bff^{n+1},\bv_h), \label{b1}
\\
(\nabla\cdot\bu_h^{n+1},q_h)&=&0 \label{b2}
\end{eqnarray}
for all $(\bv_h,q_h)\in (\bfX_h,Q_h)$.

We now analyze the long time stability over $0\leq t^n  < \infty$ and show that the scheme is uniformly bounded in time
in $L^2$ norm. Our proof is based on a $G$-norm and the associated  estimation (\ref{ey}).

\begin{theorem}(Unconditional long time  stability of NSE in $L^2$) \label{Lem:sta1} Assume $\bff \in L^{\infty}(\mathbb{R}_+,\bfV_h^*) $, then the approximation (\ref{b1})-(\ref{b2}) is long time stable in the following sense: for any $\Delta t>0$,
	\begin{eqnarray}
	\lefteqn{\|{ \bf \mathcal{U}}_{n+1}\|_G^2	 +\dfrac{\nu\Delta t}{4}\|\nabla{\bu_h^{n+1}}\|^2+\dfrac{\nu\Delta t}{16}\|\nabla{\bu_h^{n}}\|^2\nonumber\leq(1+\alpha)^{-(n+1)}(\|{ \bf \mathcal{U}}_{0}\|_G^2}\\
	&&+\dfrac{\nu\Delta t}{4}\|\nabla{\bu_h^{0}}\|^2+\dfrac{\nu\Delta t}{16}\|\nabla{\bu_h^{-1}}\|^2)+\max\{\dfrac{8C_p^2}{C_l\nu^2}, \dfrac{2\nu^{-1}\Delta t}{3}\}
\|\bff\|^2_{L^{\infty}(\mathbb{R}_+;\bfV_h^*)}.\label{lemma}
	\end{eqnarray}
	where $\mathcal{U}_0=[\bu_h^0 \quad \bu_h^{-1} \quad \bu_h^{-2}]$ and $\alpha=min\{\dfrac{C_l\nu \Delta t}{16C_p^2}, \dfrac{3}{4}\}$.
\end{theorem}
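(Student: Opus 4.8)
The plan is to run the standard discrete energy argument for the scheme and then upgrade it to a geometric (long-time) estimate by keeping part of the viscous dissipation inside the energy. First I would test (\ref{b1})--(\ref{b2}) with $\bv_h=\bu_h^{n+1}$. Since $\bu_h^{n+1}$ is discretely divergence free the pressure term drops by (\ref{b2}), and by the skew-symmetry (\ref{s1}) the extrapolated nonlinear term $b_1(3\bu_h^n-3\bu_h^{n-1}+\bu_h^{n-2},\bu_h^{n+1},\bu_h^{n+1})$ vanishes. Applying the $G$-stability identity (\ref{ey}) to the discrete time-derivative term, discarding the nonnegative quantity $\tfrac{1}{12}\|\bu_h^{n+1}-3\bu_h^{n}+3\bu_h^{n-1}-\bu_h^{n-2}\|^2$, and multiplying by $\Delta t$ gives
\[
\|\mathcal{U}_{n+1}\|_G^2-\|\mathcal{U}_{n}\|_G^2+\nu\Delta t\,\|\nabla\bu_h^{n+1}\|^2 \le \Delta t\,(\bff^{n+1},\bu_h^{n+1}).
\]
I would then bound the forcing by the dual norm, $(\bff^{n+1},\bu_h^{n+1})\le \|\bff^{n+1}\|_{\bfV_h^*}\|\nabla\bu_h^{n+1}\|$, and use Young's inequality to absorb $\tfrac{\nu}{2}\|\nabla\bu_h^{n+1}\|^2$ into the dissipation, producing the basic one-step estimate
\[
\|\mathcal{U}_{n+1}\|_G^2+\tfrac{\nu\Delta t}{2}\|\nabla\bu_h^{n+1}\|^2 \le \|\mathcal{U}_{n}\|_G^2+\tfrac{\Delta t}{2\nu}\|\bff^{n+1}\|_{\bfV_h^*}^2. \qquad (\star)
\]

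The heart of the argument is to convert $(\star)$ into a contraction for the augmented energy $E_{n+1}:=\|\mathcal{U}_{n+1}\|_G^2+\tfrac{\nu\Delta t}{4}\|\nabla\bu_h^{n+1}\|^2+\tfrac{\nu\Delta t}{16}\|\nabla\bu_h^{n}\|^2$, which is exactly the left-hand quantity in (\ref{lemma}). I would aim for the recursion $(1+\alpha)E_{n+1}\le E_n+\tfrac{\Delta t}{2\nu}\|\bff^{n+1}\|_{\bfV_h^*}^2$. Substituting $(\star)$ for $\|\mathcal{U}_n\|_G^2$ and collecting terms, this reduces to establishing nonnegativity of
\[
-\alpha\|\mathcal{U}_{n+1}\|_G^2+\tfrac{\nu\Delta t}{4}(1-\alpha)\|\nabla\bu_h^{n+1}\|^2+\tfrac{\nu\Delta t}{16}(3-\alpha)\|\nabla\bu_h^{n}\|^2+\tfrac{\nu\Delta t}{16}\|\nabla\bu_h^{n-1}\|^2.
\]
The key structural point is that these three gradient terms sit at exactly the time levels $n+1,n,n-1$ coupled by $\mathcal{U}_{n+1}$; this is precisely why the energy must retain the extra term $\tfrac{\nu\Delta t}{16}\|\nabla\bu_h^{n}\|^2$ rather than only the newest dissipation, so that a level-$(n-1)$ term survives after telescoping.

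To close the estimate I would use the norm equivalence (\ref{eqv}) together with the Poincar\'e--Friedrichs inequality to bound $\alpha\|\mathcal{U}_{n+1}\|_G^2\le \tfrac{\alpha C_P^2}{C_l}\big(\|\nabla\bu_h^{n+1}\|^2+\|\nabla\bu_h^{n}\|^2+\|\nabla\bu_h^{n-1}\|^2\big)$ and then match coefficients level by level. The binding constraint comes from the level-$(n-1)$ term, which forces $\tfrac{\alpha C_P^2}{C_l}\le\tfrac{\nu\Delta t}{16}$, while $\alpha\le\tfrac34$ keeps the level-$(n+1)$ and level-$n$ coefficients positive and dominant; together these are exactly $\alpha=\min\{\tfrac{C_l\nu\Delta t}{16 C_P^2},\tfrac34\}$. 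Iterating $E_{n+1}\le(1+\alpha)^{-1}E_n+(1+\alpha)^{-1}\tfrac{\Delta t}{2\nu}\|\bff\|_{L^{\infty}(\mathbb{R}_+;\bfV_h^*)}^2$ and summing the geometric series of ratio $(1+\alpha)^{-1}$ yields the decaying factor $(1+\alpha)^{-(n+1)}$ on the initial data and the steady constant $\tfrac{\Delta t}{2\nu\alpha}$. Evaluating $\tfrac{\Delta t}{2\nu\alpha}$ in the two cases of the minimum gives $\tfrac{8C_P^2}{C_l\nu^2}$ and $\tfrac{2\nu^{-1}\Delta t}{3}$ respectively, i.e. $\max\{\tfrac{8C_P^2}{C_l\nu^2},\tfrac{2\nu^{-1}\Delta t}{3}\}$, which is the forcing constant in (\ref{lemma}).

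I expect the main obstacle to be the coefficient bookkeeping in the recursion step: choosing the split of $\tfrac{\nu\Delta t}{2}\|\nabla\bu_h^{n+1}\|^2$ and the weights in $E_{n+1}$ so that all three gradient levels can absorb $\alpha\|\mathcal{U}_{n+1}\|_G^2$ under a single $\alpha$. Everything preceding it — the test-function choice, the $G$-identity, and the Young estimate — is routine, and the closing induction is a standard geometric summation.
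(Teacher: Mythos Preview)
Your proposal is correct and follows essentially the same route as the paper: test with $\bu_h^{n+1}$, kill the nonlinear and pressure terms, apply the $G$-identity (\ref{ey}) and Young's inequality to reach the one-step bound, then introduce the augmented energy $E_{n+1}=\|\mathcal{U}_{n+1}\|_G^2+\tfrac{\nu\Delta t}{4}\|\nabla\bu_h^{n+1}\|^2+\tfrac{\nu\Delta t}{16}\|\nabla\bu_h^{n}\|^2$ and use Poincar\'e--Friedrichs together with (\ref{eqv}) to extract $\alpha E_{n+1}$ from the leftover dissipation, yielding $(1+\alpha)E_{n+1}\le E_n+\tfrac{\Delta t}{2\nu}\|\bff^{n+1}\|_{\bfV_h^*}^2$ and the stated $\alpha$. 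The only cosmetic difference is organizational: the paper adds $\tfrac{\nu\Delta t}{4}\|\nabla\bu_h^{n}\|^2+\tfrac{\nu\Delta t}{16}\|\nabla\bu_h^{n-1}\|^2$ to both sides of the one-step estimate and then splits the resulting gradient terms explicitly (their (\ref{s15})--(\ref{blb})), whereas you substitute $(\star)$ directly into the target recursion and match level-by-level; the algebra and the two constraints producing $\alpha=\min\{\tfrac{C_l\nu\Delta t}{16C_P^2},\tfrac34\}$ are identical.
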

\begin{remark}
Theorem \label{Lem:sta1}implies that the result holds for any large $n$, since
the constants are independent of $n$.
\end{remark}

\begin{proof}
Setting $\bv_h=\bu_h^{n+1}$ in (\ref{b1}) and $q_h=p_h^{n+1}$ in (\ref{b2}) the  BLEBDF scheme and using the skew-symmetry property (\ref{s1}), we obtain
\begin{eqnarray}
\left(\dfrac{\frac{5}{3}\bu_h^{n+1}-\frac{5}{2}\bu_h^n+\bu_h^{n-1}-\frac{1}{6}\bu_h^{n-2}}{\Delta t},\bu_h^{n+1}\right) + \nu(\nabla{\bu_h^{n+1}},\nabla\bu_h^{n+1})
	=(\bff^{n+1},\bu_h^{n+1}).\label{bl}
	\end{eqnarray}
From (\ref{ey}), we get	
\begin{eqnarray}
\|{ \bf \mathcal{U}}_{n+1}\|_G^2-\|{ \bf \mathcal{U}}_{n}\|_G^2+\dfrac{1}{12}\|\bu_h^{n+1}-3\bu_h^{n}+3\bu_h^{n-1}-\bu_h^{n-2}\|^2	 +\nu\Delta t\|\nabla{\bu_h^{n+1}}\|^2
=\Delta t (\bff^{n+1},\bu_h^{n+1}).\label{bll}
\end{eqnarray}
where ${ \bf \mathcal{U}}_{n+1}=[\bu_h^{n+1} \quad \bu_h^{n} \quad \bu_h^{n-1}]^\top$ and ${ \bf \mathcal{U}}_{n}=[\bu_h^{n} \quad \bu_h^{n-1} \quad \bu_h^{n-2}]^\top$.
Applying  the Cauchy- Schwarz and Young's inequality for the right hand side of the (\ref{bll}) gives
\begin{eqnarray}
\|{ \bf \mathcal{U}}_{n+1}\|_G^2-\|{ \bf \mathcal{U}}_{n}\|_G^2+\dfrac{1}{12}\|\bu_h^{n+1}-3\bu_h^{n}+3\bu_h^{n-1}-\bu_h^{n-2}\|^2	 +\dfrac{\nu\Delta t}{2}\|\nabla{\bu_h^{n+1}}\|^2
\leq\dfrac{\nu^{-1}\Delta t}{2} \|\bff^{n+1}\|_{\bfV_h^*}^2 \label{s3}
	\end{eqnarray}
Adding both of side $\dfrac{\nu\Delta t}{4}\|\nabla{\bu_h^{n}}\|^2+\dfrac{\nu\Delta t}{16}\|\nabla{\bu_h^{n-1}}\|^2$ and discarding the third positive term in the left hand side of (\ref{s3}) yields
\begin{eqnarray}
\left(\|{ \bf \mathcal{U}}_{n+1}\|_G^2	 +\dfrac{\nu\Delta t}{4}\|\nabla{\bu_h^{n+1}}\|^2+\dfrac{\nu\Delta t}{16}\|\nabla{\bu_h^{n}}\|^2\right )+	\dfrac{\nu\Delta t}{16}\left(\|\nabla{\bu_h^{n+1}}\|^2+\|\nabla{\bu_h^{n}}\|^2+\|\nabla{\bu_h^{n-1}}\|^2\right)
\nonumber\\
+\dfrac{3\nu\Delta t}{16}\|\nabla{\bu_h^{n+1}}\|^2+\dfrac{\nu\Delta t}{8}\|\nabla{\bu_h^{n}}\|^2 \nonumber
\\
\leq\|{ \bf \mathcal{U}}_{n}\|_G^2+\dfrac{\nu\Delta t}{4}\|\nabla{\bu_h^{n}}\|^2
+\dfrac{\nu\Delta t}{16}\|\nabla{\bu_h^{n-1}}\|^2
+\dfrac{\nu^{-1}\Delta t}{2} \|\bff^{n+1}\|_{\bfV_h^*}^2.\label{s15}
	\end{eqnarray}
The terms in the left hand side of (\ref{s15}) can be rearranged by applying the Poincar\'e-Friedrichs and the equivalent norm property (\ref{eqv}):  
\begin{eqnarray}
\lefteqn{	\dfrac{\nu\Delta t}{16}\left(\|\nabla{\bu_h^{n+1}}\|^2+\|\nabla{\bu_h^{n}}\|^2+\|\nabla{\bu_h^{n-1}}\|^2\right)+\dfrac{3\nu\Delta t}{16}\|\nabla{\bu_h^{n+1}}\|^2+\dfrac{\nu\Delta t}{8}\|\nabla{\bu_h^{n}}\|^2}\nonumber\\
	&\geq& \dfrac{C_l\nu \Delta t}{16C_p^2}\|{ \bf \mathcal{U}}_{n+1}\|_G^2+\dfrac{3\nu\Delta t}{16}\|\nabla{\bu_h^{n+1}}\|^2+\dfrac{\nu\Delta t}{8}\|\nabla{\bu_h^{n}}\|^2\nonumber\\
	&\geq& \alpha (\|{ \bf \mathcal{U}}_{n+1}\|_G^2+\dfrac{\nu\Delta t}{4}\|\nabla{\bu_h^{n+1}}\|^2+\dfrac{\nu\Delta t}{16}\|\nabla{\bu_h^{n}}\|^2),\label{blb}
	\end{eqnarray}
	where $\alpha=min\{\dfrac{C_l\nu \Delta t}{16C_p^2},\dfrac{3}{4} \}$.
Inserting (\ref{blb}) in (\ref{s15}) and using induction along with  $\|\bff^{i}\|_{\bfV_h^*}^2\leq \|\bff\|^2_{L^{\infty}(\mathbb{R}_+;\bfV_h^*)}$, $\forall i=1, \dots, n+1$ gives
\begin{eqnarray}
\lefteqn{(\|{ \bf \mathcal{U}}_{n+1}\|_G^2+\dfrac{\nu\Delta t}{4}\|\nabla{\bu_h^{n+1}}\|^2+\dfrac{\nu\Delta t}{16}\|\nabla{\bu_h^{n}}\|^2)}\nonumber\\&\leq&(1+\alpha)^{-(n+1)}(\|{ \bf \mathcal{U}}_{0}\|_G^2+\dfrac{\nu\Delta t}{4}\|\nabla{\bu_h^{0}}\|^2+\dfrac{\nu\Delta t}{16}\|\nabla{\bu_h^{-1}}\|^2)\nonumber\\&&+\dfrac{(1+\alpha)^{-1}\nu^{-1}\Delta t}{2}
\lefteqn{	\|\bff\|^2_{L^{\infty}(\mathbb{R}_+;\bfV_h^*)}	\Big((1+\alpha)^{-n}+(1+\alpha)^{-(n-1)} +\dots+1\Big) } \nonumber
\end{eqnarray}
Since $|\dfrac{1}{1+\alpha}|< 1$, then we get

\begin{eqnarray}
\lefteqn{(\|{ \bf \mathcal{U}}_{n+1}\|_G^2+\dfrac{\nu\Delta t}{4}\|\nabla{\bu_h^{n+1}}\|^2+\dfrac{\nu\Delta t}{16}\|\nabla{\bu_h^{n}}\|^2)}\nonumber
\\
&\leq&(1+\alpha)^{-(n+1)}(\|{ \bf \mathcal{U}}_{0}\|_G^2+\dfrac{\nu\Delta t}{4}\|\nabla{\bu_h^{0}}\|^2+\dfrac{\nu\Delta t}{16}\|\nabla{\bu_h^{-1}}\|^2)\nonumber\\
&&+ \max \{ \dfrac{8C_p^2}{C_l\nu^2}, \dfrac{2\nu^{-1}\Delta t}{3}\} \|\bff\|^2_{L^{\infty}(\mathbb{R}_+;\bfV_h^*)}\label{bl11}
\end{eqnarray}
which is the required result.

\end{proof}

\subsection{Numerical experiment for NSE}

Throughout this paper, we carry out tests for each flow problem separately.  We expose the evolution of the $L^{2}$ norm of solution variables in long time intervals for each variable according to related problems. After each numerical test, we provide a table showing the CPU-times of relevant simulations according to varying $\Delta t$ and compare these CPU-times with classical BDF2 methods' CPU-times in order to reveal the computational advantage of the method. We choose the inf-sup stable Scott-Vogelious finite element pair for velocity-pressure couple, which is known to be discretely divergence free. We refer \cite{sv}, for the details of this selection.  Throughout all our simulations, we use public license finite element software package FreeFem++, \cite{hec}.

We now perform a numerical test in order to verify theoretical long time stability result obtained in Theorem \ref{Lem:sta1}. We set $\Omega=(0,1)^2$ with a coarse mesh resolution of $16 \times 16$.
We calculate the approximate solution in time interval $[0,400]$ and calculate the $L^{2}$ norms for different $\Delta t$ and varying $\nu$. We pick the initial velocity and the forcing term for this test problem as :
\begin{align}\label{truesol}
\textbf{u}=\left(%
\begin{array}{c}
\sin(\pi x)\sin(\pi y)\\
\cos(\pi x)\cos(\pi y)
\end{array}%
\right),\qquad
f= \left(\begin{array}{c}
 y^{2}cos(xy^{2})+\sin(x)\sin(y)\\
 2xycos(xy^{2})+\cos(x)\cos(y)
\end{array}  \right).
\end{align}
In Figure \ref{fig:nse}, we present the evolution of $\|\textbf{u}^{n+1}\|$ in long time interval $[0,400]$. We give the stability results for two large time step instances for each case. As could be easily deduced from the figure, the solutions we obtain from the proposed scheme are long time stable even by using large time step sizes. Although slight deviations seem to occur due to small viscosity for the case $\nu = 0.001$, still the solution is within an interval of stability. Notice that, these solutions are obtained for a very coarse mesh resolution of $16 \times 16$. Hence, small oscillations inside a narrow interval does not degrade the stability of the solution.
\begin{figure}[h!]
	\centerline{\hbox{
				\includegraphics[width=0.45\linewidth]{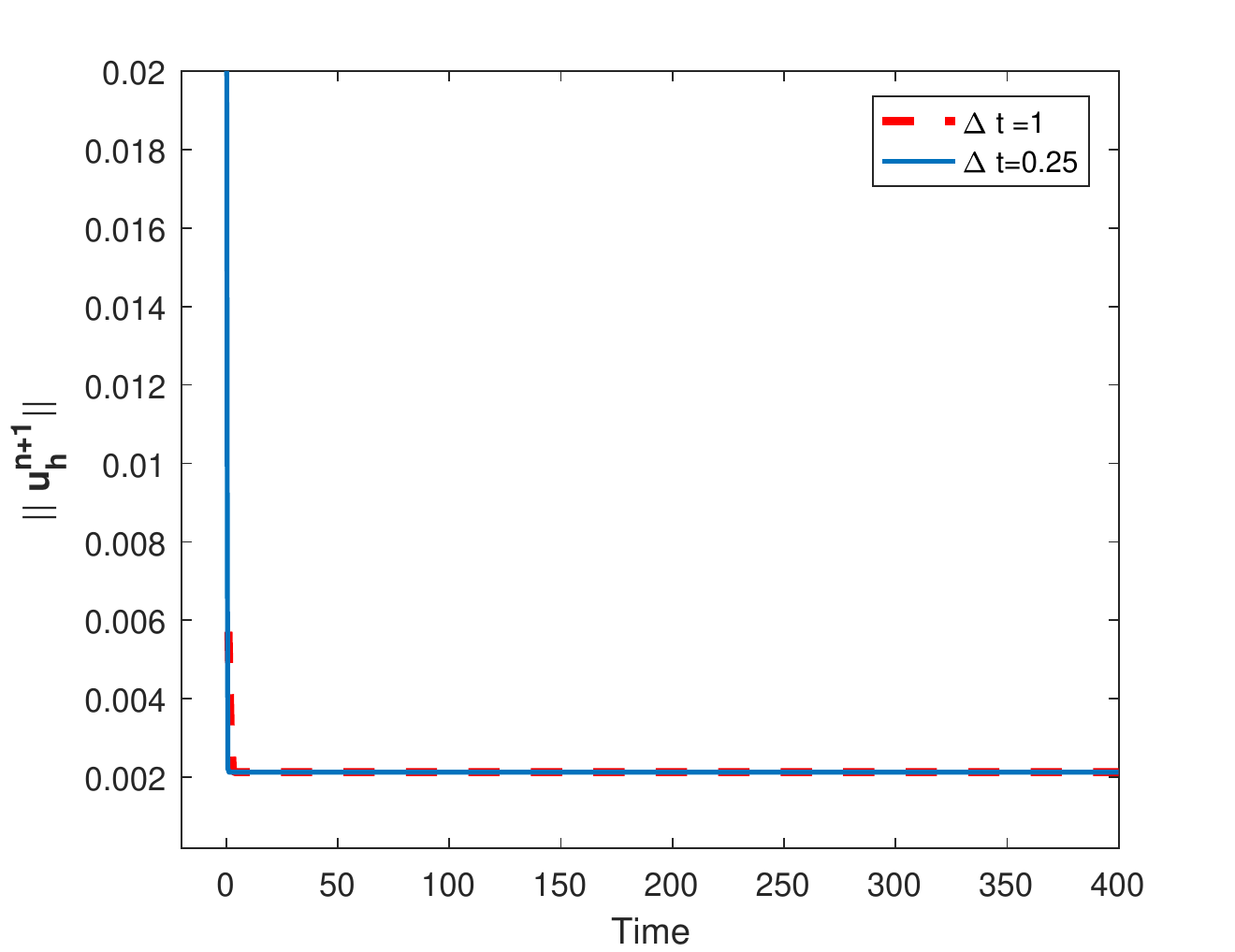}
			\includegraphics[width=0.45\linewidth]{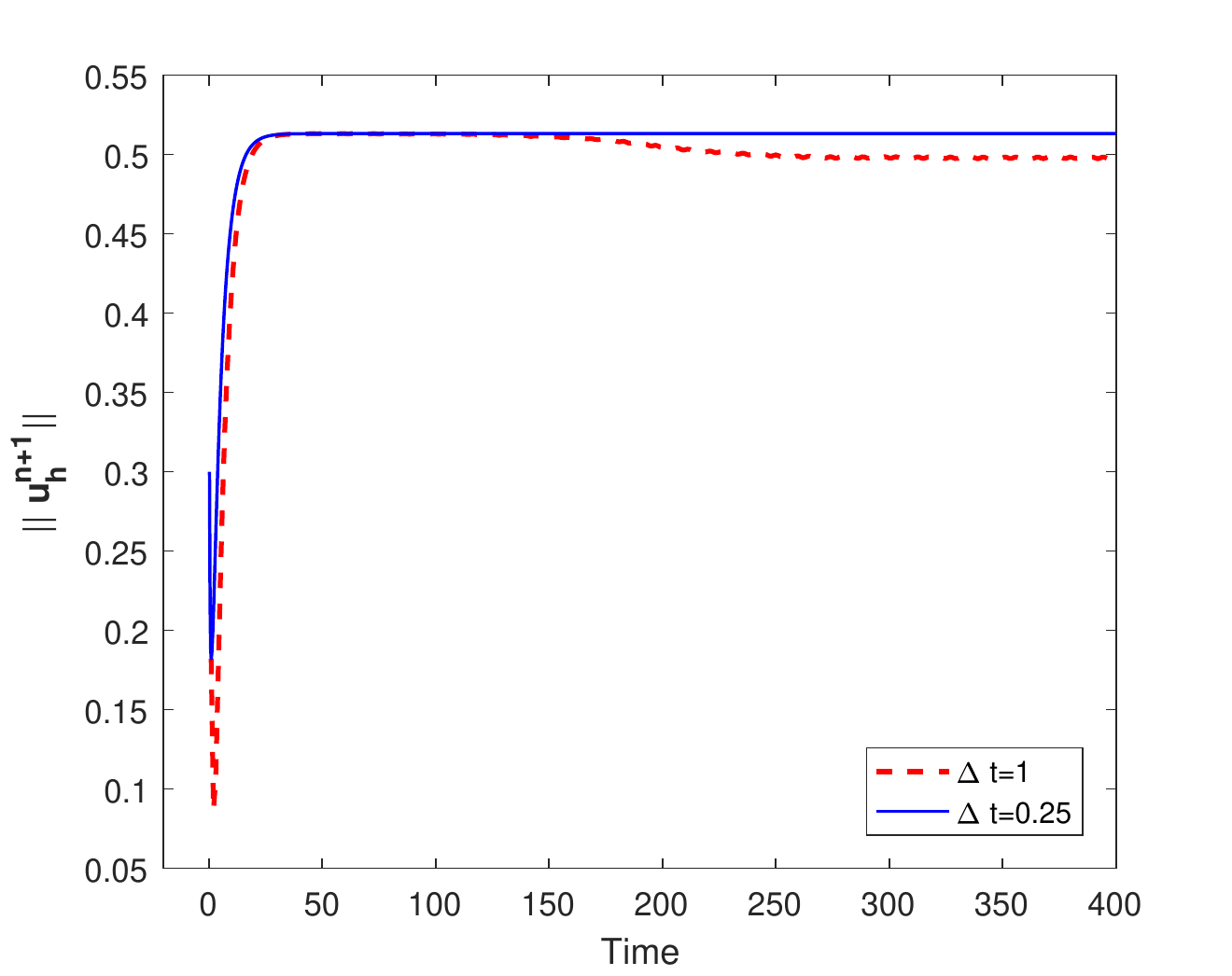}	
	}}
	\centerline{\hbox{
			\includegraphics[width=0.45\linewidth]{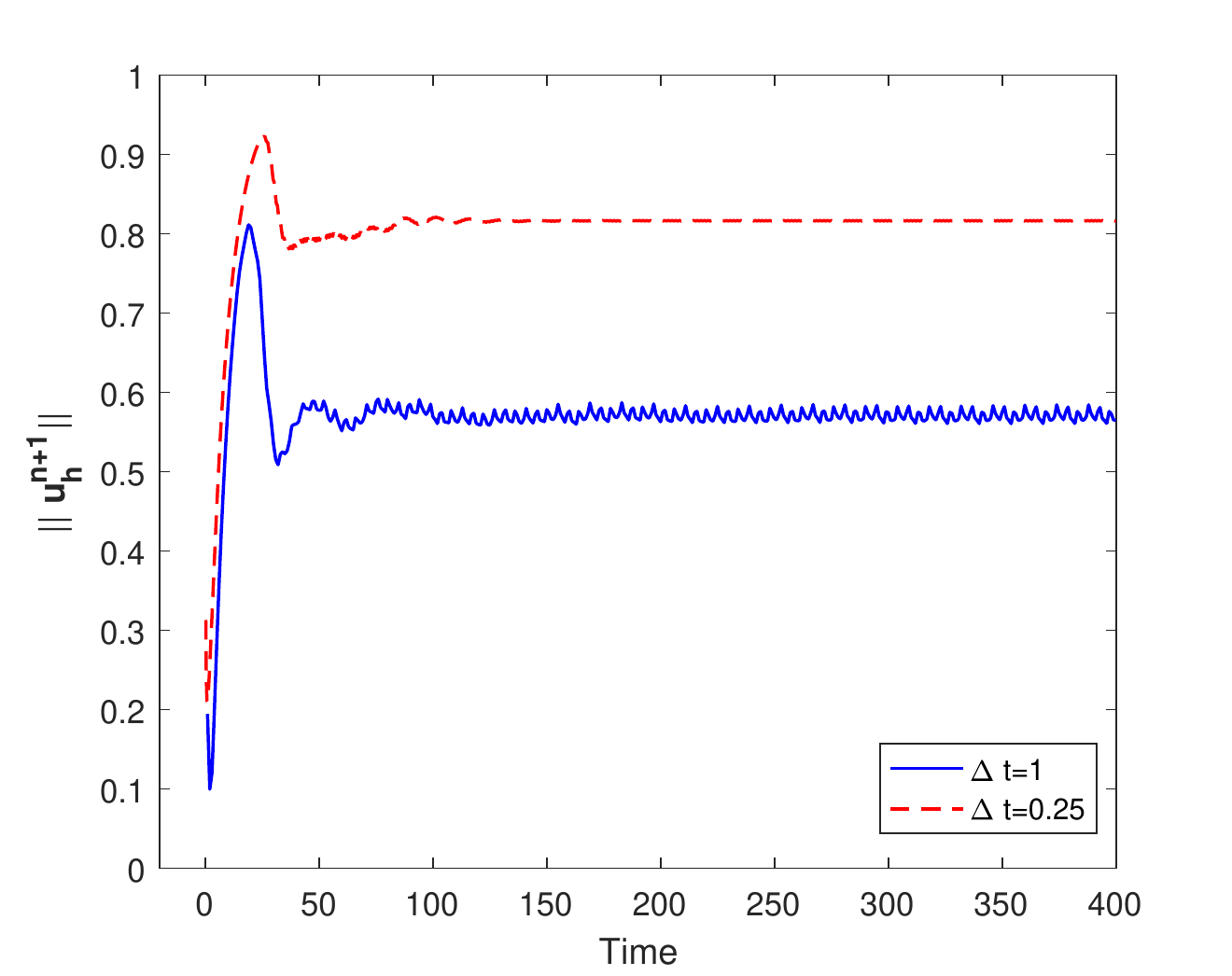}
			\includegraphics[width=0.45\linewidth]{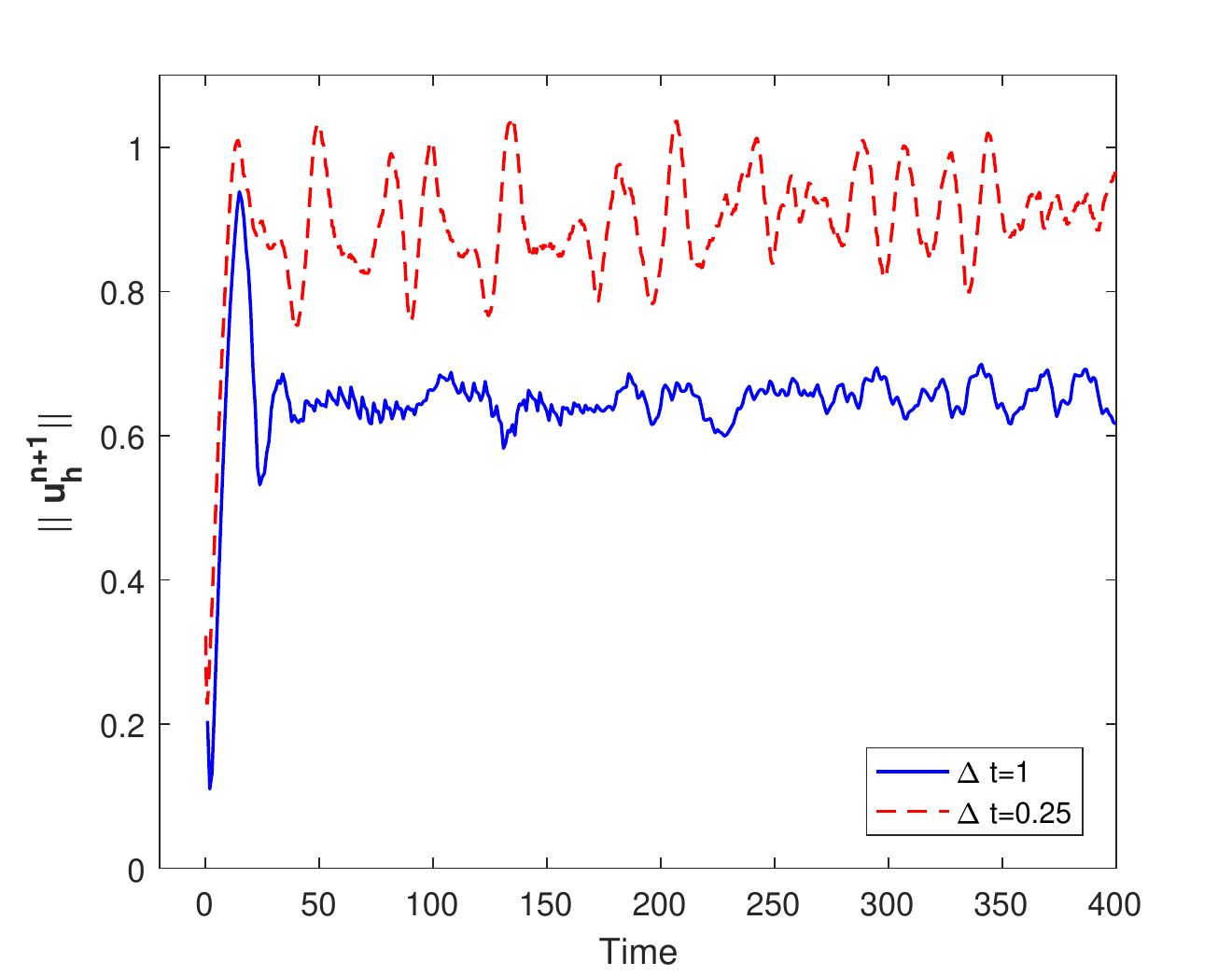}
	}}
	\caption{\label{fig:nse} Evolution of the $L^{2}$ norm of the velocity solution for varying $\nu$. $\nu = 1,\,\,\nu = 0.004$ (upper left to right) and  $\nu = 0.002,\,\,\nu = 0.001$ (lower left to right). }
\end{figure}

We also compare the CPU times of the scheme (\ref{b1})-(\ref{b2}) and run the same problem in a shorter time interval $[0,150]$. As could be observed from Table \ref{table:cpunse}, the BLEBDF scheme has an advantage in terms of simulation CPU times when compared with a classical BDF2 scheme. When $\Delta t$ becomes smaller, the difference between the CPU times are increasing. This shows the promise of the method, especially when smaller time step sizes are used.
\begin{table}[hh!]
\centering
\begin{tabular}{|c c c|}
\hline
  $\Delta t$ & BDF2 & BLEBDF \\ \hline\hline
  1 & 1.37 & 0.87 \\
  0.1 & 12 & 8 \\
  0.01 & 131.5& 78.7\\
  \hline
\end{tabular}
\caption{ Comparison of the CPU-times (seconds) of classical BDF2 scheme and BLEBDF for the NSE with $\nu=0.001$ on a time interval $[0,150]$.}
\label{table:cpunse}
\end{table}

\section{Long time stability of natural convection equations with BLEBDF}

In this section, we prove that the BLEBDF scheme  for natural convection equations
is also long time stable. The unsteady natural convection system in $\Omega$ with partitioned boundary $\partial \Omega=\Gamma_T\cup\Gamma_B$ with $\Gamma_T\cap\Gamma_B = \emptyset $, the so called Boussinesq equations are given by
\begin{equation}\label{mnc}
\begin {array}{rcll}
\bu_{t}-\nu \Delta \bu+ (\bu\cdot\nabla)\bu + \nabla p &=& Ri\,T{\bf {e}}_{2} + f   &
\mathrm{in}\ \Omega, \\
\nabla \cdot \bu&=& 0& \mathrm{in}\  \Omega,\\
T_t - \nabla\cdot(\kappa \nabla T )+ (\bu\cdot\nabla)T  &=& \gamma &
\mathrm{in }\  \Omega, \\
\bu(0,\bx)  =  \bu_0, T(0,\bx) & = & T_0 & \mathrm{in }\ \Omega,
\\
\bu=\mathbf{0}\,\, \mathrm{on}\  \partial \Omega,\,\,
T={0}\ \mathrm{on}\ \Gamma_T, &\dfrac{\partial T}{\partial {\bf n}}&=0& \mathrm{on} \ \Gamma_B,
\end{array}
\end{equation}
where $\bu$,$p$, $T$ are the fluid velocity, the pressure and the temperature, respectively. The parameters in \eqref{mnc} are the kinematic viscosity $\nu$, the thermal conductivity parameter $\kappa >0$ and the Richardson number $Ri$ and the unit vector is given by ${\bf e}_2$. The prescribed body forces are $\bff$ and $\gamma$. The initial velocity and temperature are $\bu_0$ and $T_0$, respectively.

By using similar notations as in Section 3, given $\bff$, $\gamma$, $\bu_h^0=\bu_h^{-1}=\bu_h^{-2}=\bu_0$ and $T_h^0 = T_h^{-1} = T_h^{-2} = T_0$ find $(\bu_{h}^{n+1},p_{h}^{n+1},T_{h}^{n+1}) \in (\bfX_h,Q_h,W_h)$ for $n\geq 1$ satisfying
\begin{gather}
\left(\dfrac{\frac{5}{3}\bu_h^{n+1}-\frac{5}{2}\bu_h^n+\bu_h^{n-1}-\frac{1}{6}\bu_h^{n-2}}{\Delta t},\bv_h\right) + \nu(\nabla{\bu_h^{n+1}},\nabla{\bv_h})+b_1(\bu^*,\bu_h^{n+1},\bv_h) \nonumber
\\
-(p_h,\nabla \cdot \bu_h^{n+1})=Ri\, ( T^*{\bf e}_2,\bv_h)+ (\bff^{n+1},\bv_h), \label{mnub}\\
(q_h,\nabla \cdot \bv_h^{n+1})=0 \label{mnup} \\
{\left(\dfrac{\frac{5}{3}T_h^{n+1}-\frac{5}{2}T_h^n+T_h^{n-1}-\frac{1}{6}T_h^{n-2}}{\Delta t},S_h\right)
+\kappa (\nabla T_h^{n+1}, \nabla S_h)+b_2(\bu^*,T_h^{n+1},S_h)} \nonumber \\
=(\gamma^{n+1}, S_h),\label{mntb}
\end{gather}
for all $(\bv_h,q_h,S_h) \in (\bfX_h,Q_h,W_h)$ where $\bu^*=3\bu_h^n-3\bu_h^{n-1}+\bu_h^{n-2}$ and $T^*=3T_h^n-3T_h^{n-1}+T_h^{n-2}$. Herein the related skew-symmetric form is given by
\begin{eqnarray}
b_2(\bu,T,S) &:=&
\frac{1}{2}\left(((\bu\cdot\nabla)
T,S)-((\bu\cdot\nabla)S,T)\right)
\end{eqnarray}
for all $\bu\in \bfX, T,S\in W.$

\begin{theorem}(Unconditional long time  stability of natural convection in $L^2$) \label{Lem:stamn}
Assume $\bff\in L^{\infty}(\mathbb{R}_+,\bfV_h^*) $ and $\gamma \in L^{\infty}(\mathbb{R}_+,W_h^*) $, then the approximation (\ref{mnub})-(\ref{mntb})
is long time stable  in the following sense: for any $\Delta t>0$,
\begin{eqnarray}
\lefteqn{	\|{ \bf \mathcal{U}}_{n+1}\|_G^2+	\|{ \bf \mathcal{T}}_{n+1}\|_G^2 +\dfrac{\nu\Delta t}{4}\|\nabla{\bu_h^{n+1}}\|^2+\dfrac{\kappa\Delta t}{4}\|\nabla{T_h^{n+1}}\|^2\leq(1+\alpha)^{-(n+1)}\Big(\|{ \bf \mathcal{U}}_{0}\|_G^2+\dfrac{\nu\Delta t}{4}\|\nabla{\bu_h^{0}}\|^2}\nonumber\\
	&&+\dfrac{\nu\Delta t}{16}\|\nabla{\bu_h^{-1}}\|^2\Big)	+  (K_{\alpha}+(1+\beta)^{-1})\Big( (1+\beta)^{-n}(\|{ \bf \mathcal{T}}_{0}\|_G^2+\dfrac{\kappa\Delta t}{4}\|\nabla{T_h^{0}}\|^2+\dfrac{\kappa\Delta t}{16}\|\nabla{T_h^{-1}}\|^2)\Big)
	\nonumber\\
	&&+(K_{\alpha}+1)\max \{ \dfrac{8C_p^2}{C_l\kappa^2}, \dfrac{2\kappa^{-1}\Delta t}{3}\} \|\gamma\|_{L^{\infty}(\mathbb{R}_+,W_h^*)}^2\nonumber\\
	&&+\max\{\dfrac{16C_p^2}{C_l\nu^2}, \dfrac{ 4\nu^{-1}\Delta t}{3}\}\|\bff\|_{L^{\infty}(\mathbb{R}_+,\bfV_h^*)}	 \label{lemmu}
	\end{eqnarray}
	where  $K_{\alpha}=\dfrac{49C_uC_p^2\nu^{-1}Ri^2\Delta t}{\alpha}$, $\mathcal{U}_0=[\bu_h^0 \quad \bu_h^{-1} \quad \bu_h^{-2}]$,  $\mathcal{T}_0=[T_h^0 \quad T_h^{-1} \quad T_h^{-2}]$, $\alpha=min\{\dfrac{C_l\nu \Delta t}{16C_p^2},\dfrac{3}{4}\}$,  $\beta=min\{\dfrac{C_l\kappa \Delta t}{16C_p^2},\dfrac{3}{4} \}$,
\end{theorem}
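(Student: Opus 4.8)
The plan is to decouple the energy argument into a self-contained temperature estimate and a velocity estimate that is driven by the temperature through the buoyancy term, and then feed the former into the latter. The key structural observation is that taking $S_h=T_h^{n+1}$ in (\ref{mntb}) annihilates the trilinear term by skew-symmetry, $b_2(\bu^*,T_h^{n+1},T_h^{n+1})=0$, so the temperature equation is energetically independent of the velocity. Hence the temperature estimate is essentially the proof of Theorem \ref{Lem:sta1} with $\nu\rightsquigarrow\kappa$, $\bff\rightsquigarrow\gamma$, $\alpha\rightsquigarrow\beta$: applying (\ref{ey}), Cauchy--Schwarz, Young's inequality and the norm equivalence (\ref{eqv}) yields the per-step contraction $(1+\beta)E_{n+1}^T\le E_n^T+\tfrac{\kappa^{-1}\Delta t}{2}\|\gamma^{n+1}\|_{W_h^*}^2$, where $E_n^T:=\|\mathcal{T}_n\|_G^2+\tfrac{\kappa\Delta t}{4}\|\nabla T_h^n\|^2+\tfrac{\kappa\Delta t}{16}\|\nabla T_h^{n-1}\|^2$, and after induction the bound
\[
\|\mathcal{T}_n\|_G^2\le E_n^T\le (1+\beta)^{-n}E_0^T+\max\Big\{\tfrac{8C_p^2}{C_l\kappa^2},\tfrac{2\kappa^{-1}\Delta t}{3}\Big\}\|\gamma\|_{L^{\infty}(\mathbb{R}_+,W_h^*)}^2 .
\]

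Next I would run the velocity estimate. Setting $\bv_h=\bu_h^{n+1}$, $q_h=p_h$ in (\ref{mnub})--(\ref{mnup}) and using $b_1(\bu^*,\bu_h^{n+1},\bu_h^{n+1})=0$ together with (\ref{ey}) reproduces the left-hand side of (\ref{bll}), but now with the extra source $\Delta t\,Ri\,(T^*{\bf e}_2,\bu_h^{n+1})$ on the right. The difference from the pure NSE case is that the viscous dissipation $\nu\Delta t\|\nabla\bu_h^{n+1}\|^2$ must be split three ways: $\tfrac{\nu\Delta t}{4}$ to absorb $\bff$, a further $\tfrac{\nu\Delta t}{4}$ to absorb the buoyancy term, and the remaining $\tfrac{\nu\Delta t}{2}$ to drive the exponential decay exactly as in (\ref{blb}). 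For the buoyancy term I would use Cauchy--Schwarz, the Poincar\'e--Friedrichs inequality and Young's inequality to obtain $\Delta t\,Ri(T^*{\bf e}_2,\bu_h^{n+1})\le\tfrac{\nu\Delta t}{4}\|\nabla\bu_h^{n+1}\|^2+C_p^2Ri^2\nu^{-1}\Delta t\,\|T^*\|^2$, then control the extrapolant by the triangle inequality and (\ref{eqv}): $\|T^*\|\le 3\|T_h^n\|+3\|T_h^{n-1}\|+\|T_h^{n-2}\|$ gives $\|T^*\|^2\le 49\,C_u\|\mathcal{T}_n\|_G^2$, which is where the constant $49C_u$ in $K_\alpha$ comes from. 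This produces the velocity contraction
\[
(1+\alpha)E_{n+1}^u\le E_n^u+\tfrac{\Delta t}{\nu}\|\bff^{n+1}\|_{\bfV_h^*}^2+c\,\|\mathcal{T}_n\|_G^2,\qquad c:=49C_uC_p^2Ri^2\nu^{-1}\Delta t=\alpha K_\alpha ,
\]
with $E_n^u$ the velocity analogue of $E_n^T$ and $\alpha$ as in Theorem \ref{Lem:sta1}.

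Finally I would close the argument by inserting the temperature bound into the velocity recursion and unrolling. Substituting the displayed bound for $\|\mathcal{T}_n\|_G^2$ turns the velocity recursion into a linear inhomogeneous recursion whose forcing has a constant part (from $\|\bff\|$ and the $\|\gamma\|$ contribution routed through the coupling) and a geometrically decaying part (from $(1+\beta)^{-n}E_0^T$). Iterating gives a purely geometric sum $\sum_{k\ge1}(1+\alpha)^{-k}=1/\alpha$ for the constant pieces --- this yields $\tfrac{\Delta t}{\nu\alpha}=\max\{\tfrac{16C_p^2}{C_l\nu^2},\tfrac{4\nu^{-1}\Delta t}{3}\}$ in front of $\|\bff\|$ and the factor $K_\alpha=c/\alpha$ in front of the $\gamma$-term --- plus a mixed sum $\sum_{j}(1+\alpha)^{-(n-j)}(1+\beta)^{-j}$ for the decaying temperature data. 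Adding back $E_{n+1}^T$ to recover the full left-hand side of (\ref{lemmu}) then gives the stated estimate.

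The step I expect to be the main obstacle is this last bookkeeping: bounding the mixed geometric sum $\sum_{j=0}^{n}(1+\alpha)^{-(n+1-j)}(1+\beta)^{-j}$ cleanly when $\alpha\neq\beta$ in general, and tracking constants so that they collapse into the stated coefficients $(K_\alpha+(1+\beta)^{-1})(1+\beta)^{-n}$ and $(K_\alpha+1)$. Everything else is a direct transcription of the NSE proof; the genuinely new analytical content is the treatment of the buoyancy coupling and its propagation through the two coupled geometric decays.
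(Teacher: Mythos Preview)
Your proposal is correct and follows essentially the same route as the paper: first derive the self-contained temperature contraction via $S_h=T_h^{n+1}$ and skew-symmetry of $b_2$, then test the velocity equation with $\bu_h^{n+1}$, split the viscous dissipation to absorb both $\bff$ and the buoyancy term, bound $\|T^*\|^2\le 49C_u\|\mathcal{T}_n\|_G^2$, insert the temperature estimate at level $n$ into the velocity recursion, and iterate. The paper itself leaves the final mixed-geometric-sum bookkeeping terse (it writes only ``Applying induction adding (\ref{bl5tn}) to (\ref{blu3}) completes the proof''), so the step you flag as the main obstacle is exactly the one the paper does not spell out.
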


\begin{proof}
The proof starts with a stability bound for temperature letting $S_h=T_h^{n+1}$ in (\ref{mntb}) and
using the skew symmetry property $b_2(\bu^{*},T_h^{n+1},T_h^{n+1})=0$. Along with (\ref{ey}) and multiplying with $\Delta t$, yields
\begin{eqnarray}
\|{ \bf \mathcal{T}}_{n+1}\|_G^2-\| { \bf \mathcal{T}}_{n}\|_G^2+\dfrac{1}{12}\|T_h^{n+1}-3T_h^{n}+3T_h^{n-1}-T_h^{n-2}\|^2+\kappa \Delta t\|\nabla T_h^{n+1}\|^2=\Delta t(\gamma^{n+1}, T_h^{n+1}).\quad \label{mnt1}
	\end{eqnarray}
where ${ \bf \mathcal{T}}_{n+1}=[T_h^{n+1} \quad T_h^{n} \quad T_h^{n-1}]^\top$ and ${ \bf \mathcal{T}}_{n}=[T_h^{n} \quad T_h^{n-1} \quad T_h^{n-2}]^\top$.
Applying the Cauchy-Schwarz, and Young's inequalities leads to
\begin{eqnarray}
\|{ \bf \mathcal{T}}_{n+1}\|_G^2-\|{ \bf \mathcal{T}}_{n}\|_G^2+\dfrac{1}{12}\|T_h^{n+1}-3T_h^{n}+3T_h^{n-1}-T_h^{n-2}\|^2	 +\dfrac{\kappa\Delta t}{2}\|\nabla{T_h^{n+1}}\|^2\leq \frac{\kappa^{-1}\Delta t}{2}\|\gamma^{n+1}\|_{W_h^*}^2.\label{blt}
	\end{eqnarray}
Adding both of side  $\dfrac{\kappa\Delta t}{4}\|\nabla{T_h^{n}}\|^2+\dfrac{\kappa\Delta t}{16}\|\nabla{T_h^{n-1}}\|^2$ and dropping the nonnegative terms produce
	\begin{gather}
(\|{  \mathcal{T}}_{n+1}\|_G^2	 +\dfrac{\kappa\Delta t}{4}\|\nabla{T_h^{n+1}}\|^2+\dfrac{\kappa\Delta t}{16}\|\nabla{T_h^{n}}\|^2) +  \dfrac{\kappa\Delta t}{16} \left ( \|\nabla{T_h^{n+1}}\|^2+\|\nabla{T_h^{n}}\|^2+ \|\nabla{T_h^{n-1}}\|^2\right) \nonumber
\\
+\dfrac{3\kappa\Delta t}{16}\|\nabla{T_h^{n+1}}\|^2+\dfrac{\kappa\Delta t}{8}\|\nabla{T_h^{n}}\|^2
\leq\|{  \mathcal{T}}_{n}\|_G^2+\dfrac{\kappa\Delta t}{4}\|\nabla{T_h^{n}}\|^2
+\dfrac{\kappa\Delta t}{16}\|\nabla{T_h^{n-1}}\|^2+\frac{\kappa^{-1}\Delta t}{2}\|\gamma^{n+1}\|_{W_h^*}^2.\label{bl2t}
	\end{gather}
The estimation of (\ref{bl2t}) follows closely that of (\ref{blb}). Again, the last five terms of the left hand side of (\ref{bl2t}) can be rearranged by the applying the Poincar\'e-Friedrichs and the equivalent norm property (\ref{eqv}) as
	\begin{eqnarray}
\lefteqn{\dfrac{\kappa\Delta t}{16}( \|\nabla{T_h^{n+1}}\|^2+\|\nabla{T_h^{n}}\|^2+\|\nabla{T_h^{n-1}}\|^2)+\dfrac{3\kappa\Delta t}{16}\|\nabla{T_h^{n+1}}\|^2+\dfrac{\kappa\Delta t}{8}\|\nabla{T_h^{n}}\|^2 }\nonumber
\\
&\geq& \beta (\|{ \bf \mathcal{T}}_{n+1}\|_G^2+\dfrac{\kappa\Delta t}{4}\|\nabla{T_h^{n+1}}\|^2+\dfrac{\kappa\Delta t}{16}\|\nabla{T_h^{n}}\|^2),\label{blbt}
	\end{eqnarray}
where $\beta=min\{\dfrac{C_l\kappa \Delta t}{16C_p^2}, \dfrac{3}{4}\}$.
Arguing as before and inserting (\ref{blbt}) in (\ref{bl2t}) and using induction leads to
	\begin{eqnarray}
\lefteqn{	\|{ \bf \mathcal{T}}_{n+1}\|_G^2	 +\dfrac{\kappa\Delta t}{4}\|\nabla{T_h^{n+1}}\|^2+\dfrac{\kappa\Delta t}{16}\|\nabla{T_h^{n}}\|^2}\nonumber
	\\
	&\leq&(1+\beta)^{-(n+1)}(\|{ \bf \mathcal{T}}_{0}\|_G^2+\dfrac{\kappa\Delta t}{4}\|\nabla{T_h^{0}}\|^2
	+\dfrac{\kappa\Delta t}{16}\|\nabla{T_h^{-1}}\|^2)+\dfrac{\kappa^{-1}\Delta t}{2\beta} \|\gamma\|_{L^{\infty}(\mathbb{R}_+,W_h^*)}^2,\label{bl5tn}
	\end{eqnarray}
which is the result for long the time stability for the temperature.
Letting $\bv_h=\bu_h^{n+1}$ in (\ref{mnub}), and $q_h=p_h^{n+1}$ in
(\ref{mnup}), one obtains
\begin{eqnarray}
\left(\dfrac{\frac{5}{3}\bu_h^{n+1}-\frac{5}{2}\bu_h^n+\bu_h^{n-1}-\frac{1}{6}\bu_h^{n-2}}{\Delta t},\bu_h^{n+1}\right) + \nu\|\nabla{\bu_h^{n+1}}\|^2=Ri\, (T^{*}{\bf e}_2,\bu_h^{n+1})+ (\bff^{n+1},\bu_h^{n+1}).\label{blu}
\end{eqnarray}
Repeating the arguments used obtaining (\ref{s3}) yields
	\begin{eqnarray}
\lefteqn{	\|{ \bf \mathcal{U}}_{n+1}\|_G^2-\|{ \bf \mathcal{U}}_{n}\|_G^2+\dfrac{1}{12}\|\bu_h^{n+1}-3\bu_h^{n}+3\bu_h^{n-1}-\bu_h^{n-2}\|^2	 +\dfrac{\nu\Delta t}{2}\|\nabla{\bu_h^{n+1}}\|^2}\nonumber\\
	&\leq&C_p^2\nu^{-1}Ri^2 \Delta t \|T^*\|^2\|{\bf e}_2\|^2+\nu^{-1} \Delta t\|\bff^{n+1}\|_{\bfV_h^*}
	\label{blu1}
	\end{eqnarray}
Note that ${\bf e}_2$ is a unit vector, i.e., $\|{\bf e}_2\|=1$ and using the definition of $\mathcal{T}^*$ and (\ref{eqv}), we get $\|\mathcal{T}^*\|\leq 7\|\mathcal{T}_n\|\leq 7\sqrt{C_u} \|\mathcal{T}_n\|_G$, we get
		\begin{gather}
	\|{ \bf \mathcal{U}}_{n+1}\|_G^2-\|{ \bf \mathcal{U}}_{n}\|_G^2+\dfrac{1}{12}\|\bu_h^{n+1}-3\bu_h^{n}+3\bu_h^{n-1}-\bu_h^{n-2}\|^2	 +\dfrac{\nu\Delta t}{2}\|\nabla{\bu_h^{n+1}\|^2}\nonumber\\
	\leq 49C_uC_p^2\nu^{-1}Ri^2  \Delta t \|\mathcal{T}_{n}\|_G^2+\nu^{-1} \Delta t\|\bff^{n+1}\|_{\bfV_h^*}
	\label{bl3}
	\end{gather}
Arguing as in (\ref{s15}), one gets the following estimation for (\ref{bl3})
	\begin{eqnarray}
\lefteqn{(1+\alpha)(\|{ \bf \mathcal{U}}_{n+1}\|_G^2	 +\dfrac{\nu\Delta t}{4}\|\nabla{\bu_h^{n+1}}\|^2+\dfrac{\nu\Delta t}{16}\|\nabla{\bu_h^{n}}\|^2)}\nonumber\\
&\leq&(\|{ \bf \mathcal{U}}_{n}\|_G^2+\dfrac{\nu\Delta t}{4}\|\nabla{\bu_h^{n}}\|^2+\dfrac{\nu\Delta t}{16}\|\nabla{\bu_h^{n-1}}\|^2)\nonumber
\\
&&+49C_uC_p^2\nu^{-1}Ri^2 \Delta t \|\mathcal{T}_{n}\|_G^2 +\nu^{-1} \Delta t\|\bff^{n+1}\|_{\bfV_h^*} \label{blu2}
	\end{eqnarray}
where $\alpha=min\{\dfrac{C_l\nu \Delta t}{16C_p^2},\dfrac{3}{4}\}$.	Putting $n$ instead of $(n+1)$ in (\ref{bl5tn}) and inserting it in (\ref{blu2}) yields
	\begin{eqnarray}
	\lefteqn{(1+\alpha)(\|{ \bf \mathcal{U}}_{n+1}\|_G^2	 +\dfrac{\nu\Delta t}{4}\|\nabla{\bu_h^{n+1}}\|^2+\dfrac{\nu\Delta t}{16}\|\nabla{\bu_h^{n}}\|^2)} \nonumber	
	\\
	&\leq&(\|{ \bf \mathcal{U}}_{n}\|_G^2+\dfrac{\nu\Delta t}{4}\|\nabla{\bu_h^{n}\|^2}+\dfrac{\nu\Delta t}{16}\|\nabla{\bu_h^{n-1}}\|^2)\nonumber
	\\
&&	+49C_uC_p^2\nu^{-1}Ri^2 \Delta t \Big( (1+\beta)^{-n}(\|{ \bf \mathcal{T}}_{0}\|_G^2+\dfrac{\kappa\Delta t}{4}\|\nabla{T_h^{0}}\|^2\nonumber\\&&+\dfrac{\kappa\Delta t}{16}\|\nabla{T_h^{-1}}\|^2)+\dfrac{\kappa^{-1}\Delta t}{2\beta} \|\gamma\|_{L^{\infty}(\mathbb{R}_+,W_h^*)}^2
	\Big) +\nu^{-1} \Delta t\|\bff^{n+1}\|_{\bfV_h^*}\label{blu3}
	\end{eqnarray}
Applying induction adding (\ref{bl5tn}) to (\ref{blu3}) completes the proof theorem.

\end{proof}

\subsection{Numerical experiment for natural convection equations}

In this subsection, we present the long time numerical simulation results of a coupled
Navier-Stokes system given by the scheme (\ref{mnub})-(\ref{mntb}).  We consider natural convection equations in $\Omega=(0,1)^2$ with a coarse mesh resolution of $16 \times 16$. For this problem, we solve a buoyancy driven cavity test in which, upper and lower boundaries of the square domain are kept adiabatic and vertical boundaries are kept at different temperatures imposed as Dirichlet boundary conditions. The flow initiates naturally by density differences due to the temperature variations at opposing vertical boundaries along with the gravitational force. The domain presented in Figure \ref{fig:ncdomain} is used in the numerical test. Velocity boundary conditions are no-slip everywhere. We set the Richardson number, $Ri=1$ and perform the computation in the time interval $[0,150]$. The results for different viscosities and time step sizes are provided. We examine the time evolution of the computed solutions in discrete norms $\|\bu_h^n\|$ and $\|T_h^n\|$.
\begin{figure}[htb]
\centerline{\hbox{
\includegraphics[width=0.45\linewidth]{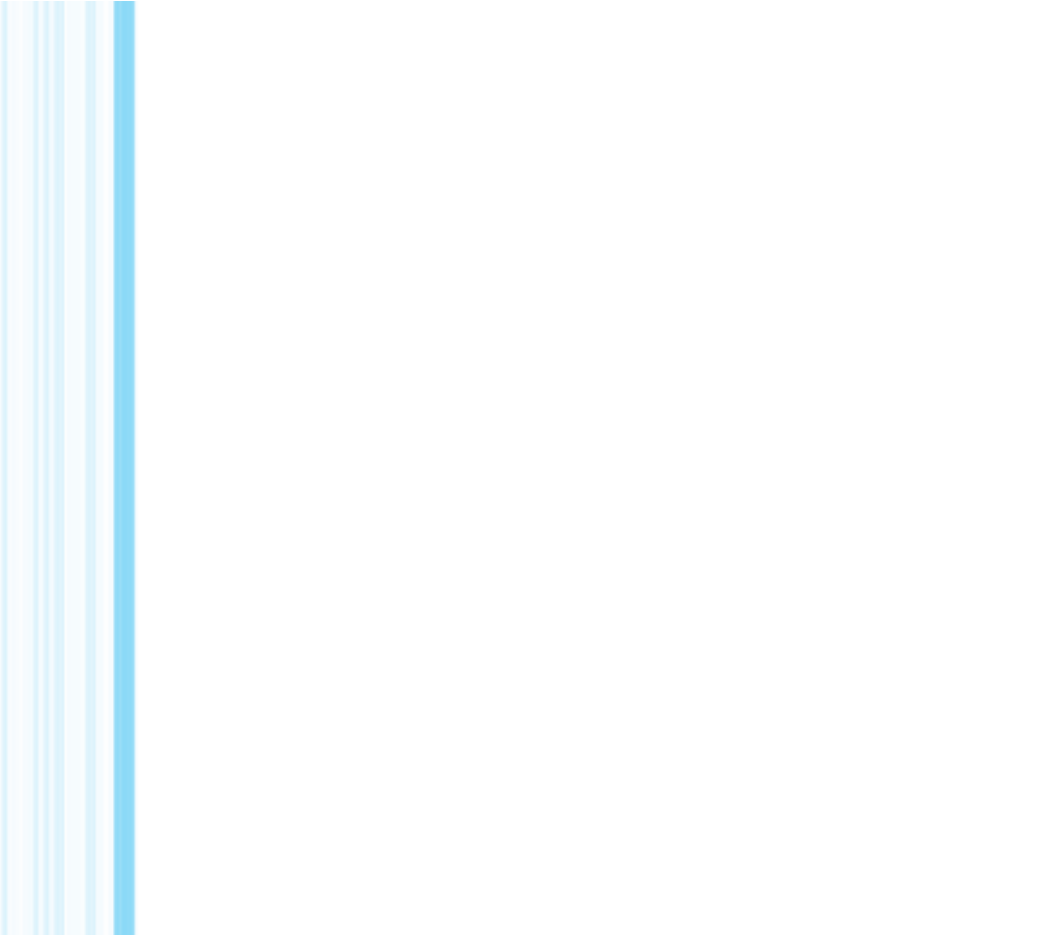}
}}
\caption{\label{fig:ncdomain} The computational domain for the natural convection test example.}
\end{figure}
\begin{figure}[h!]
	\centerline{\hbox{
				\includegraphics[width=0.45\linewidth]{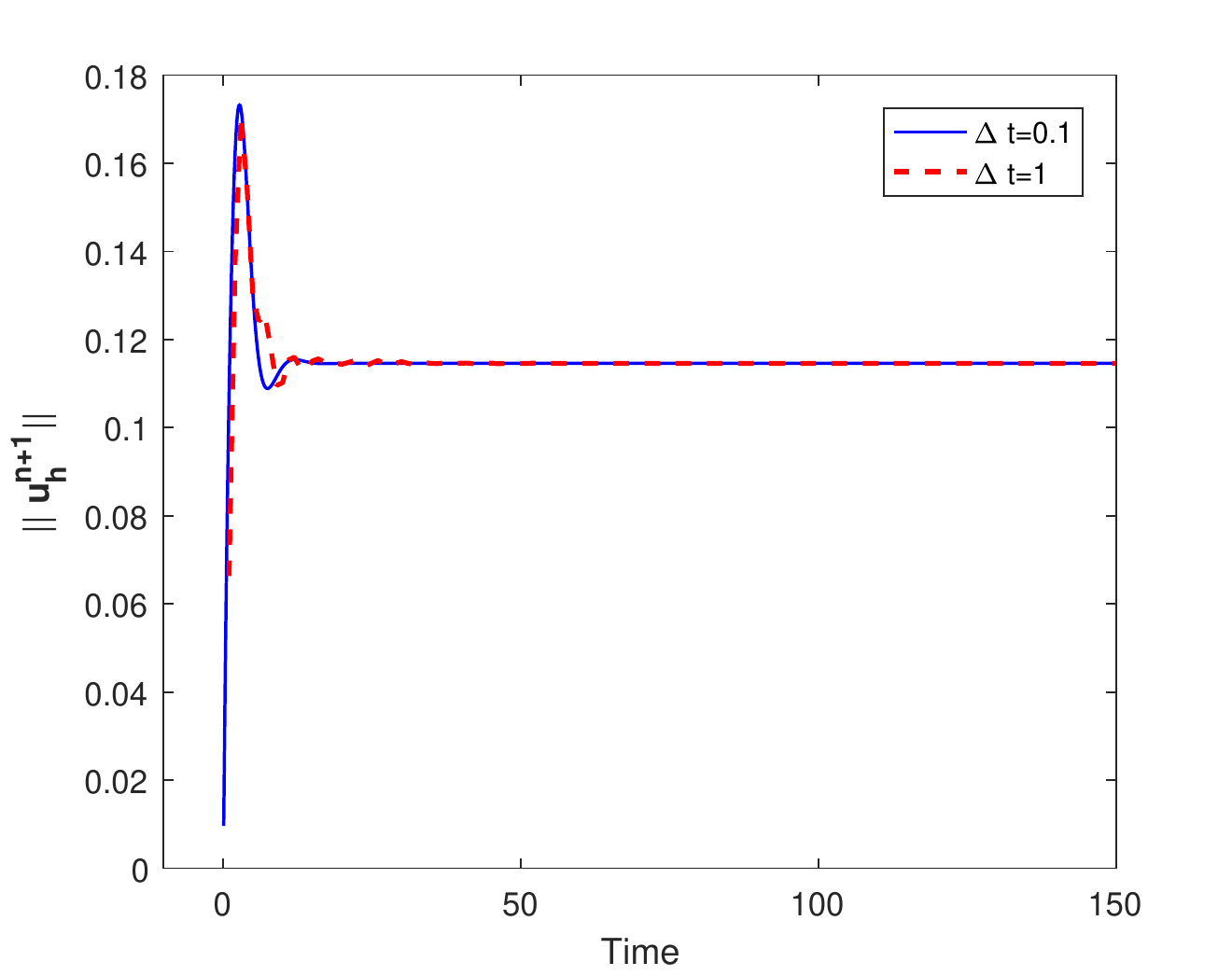}
				\includegraphics[width=0.45\linewidth]{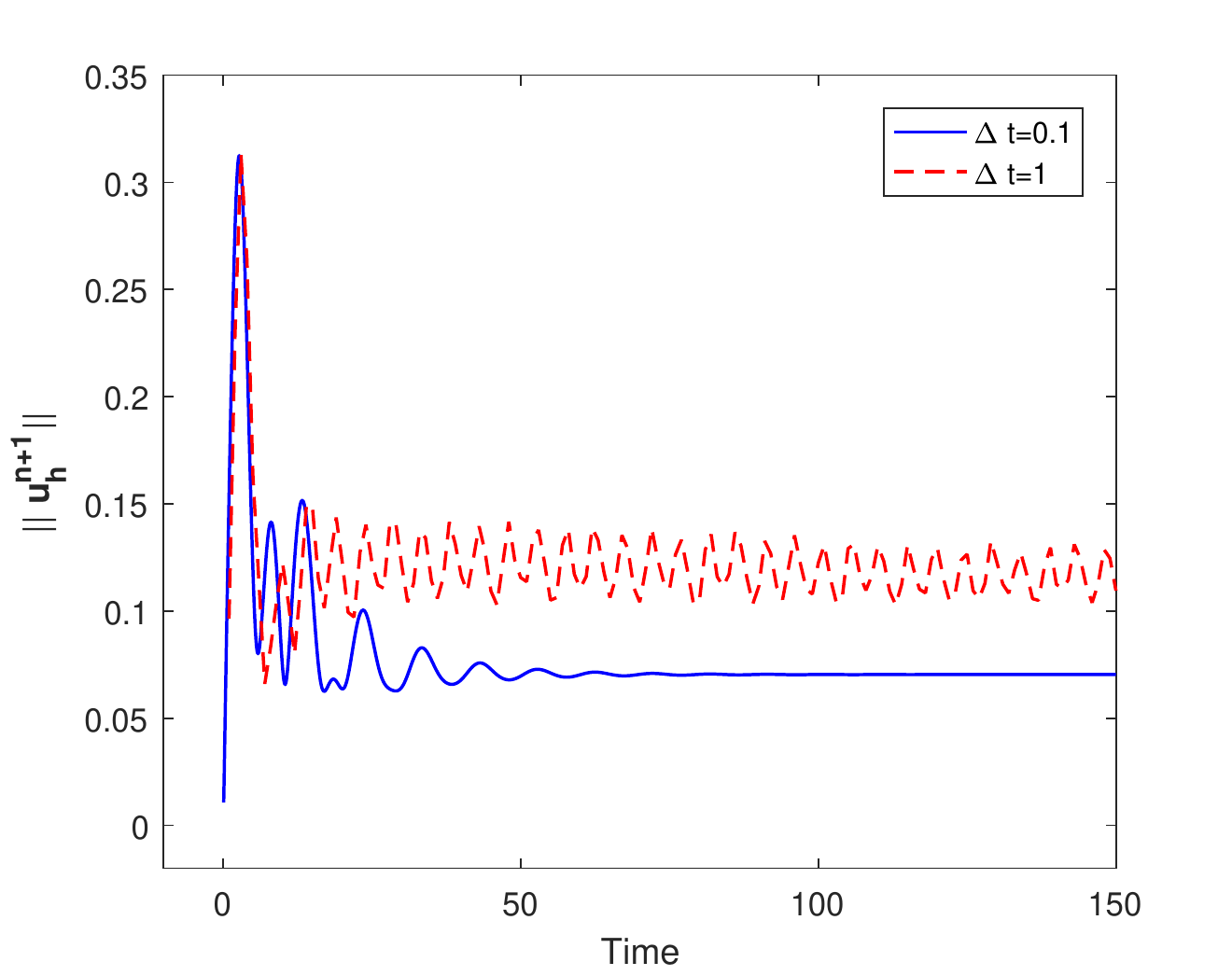}	
	}}
	\centerline{\hbox{
			\includegraphics[width=0.45\linewidth]{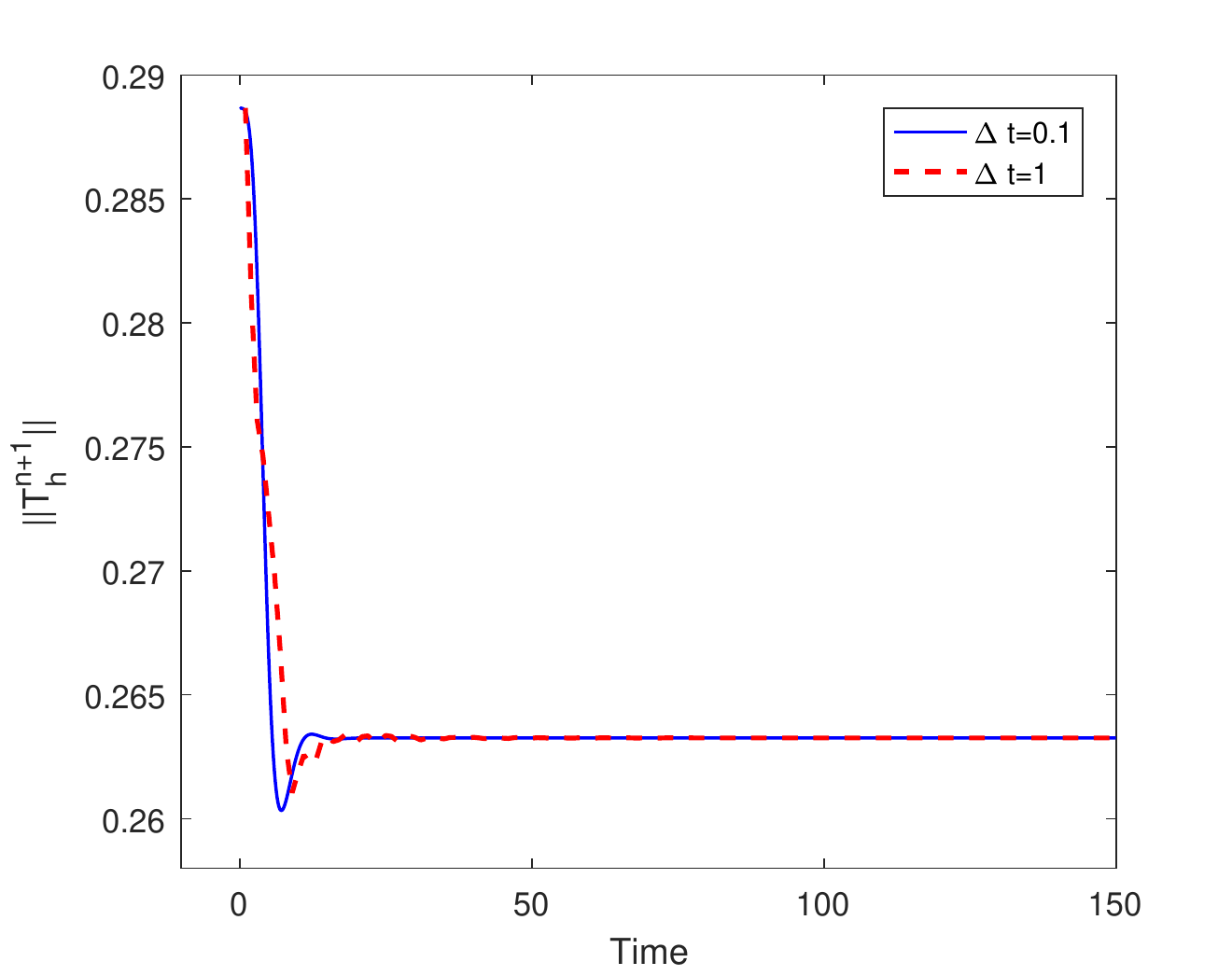}
			\includegraphics[width=0.45\linewidth]{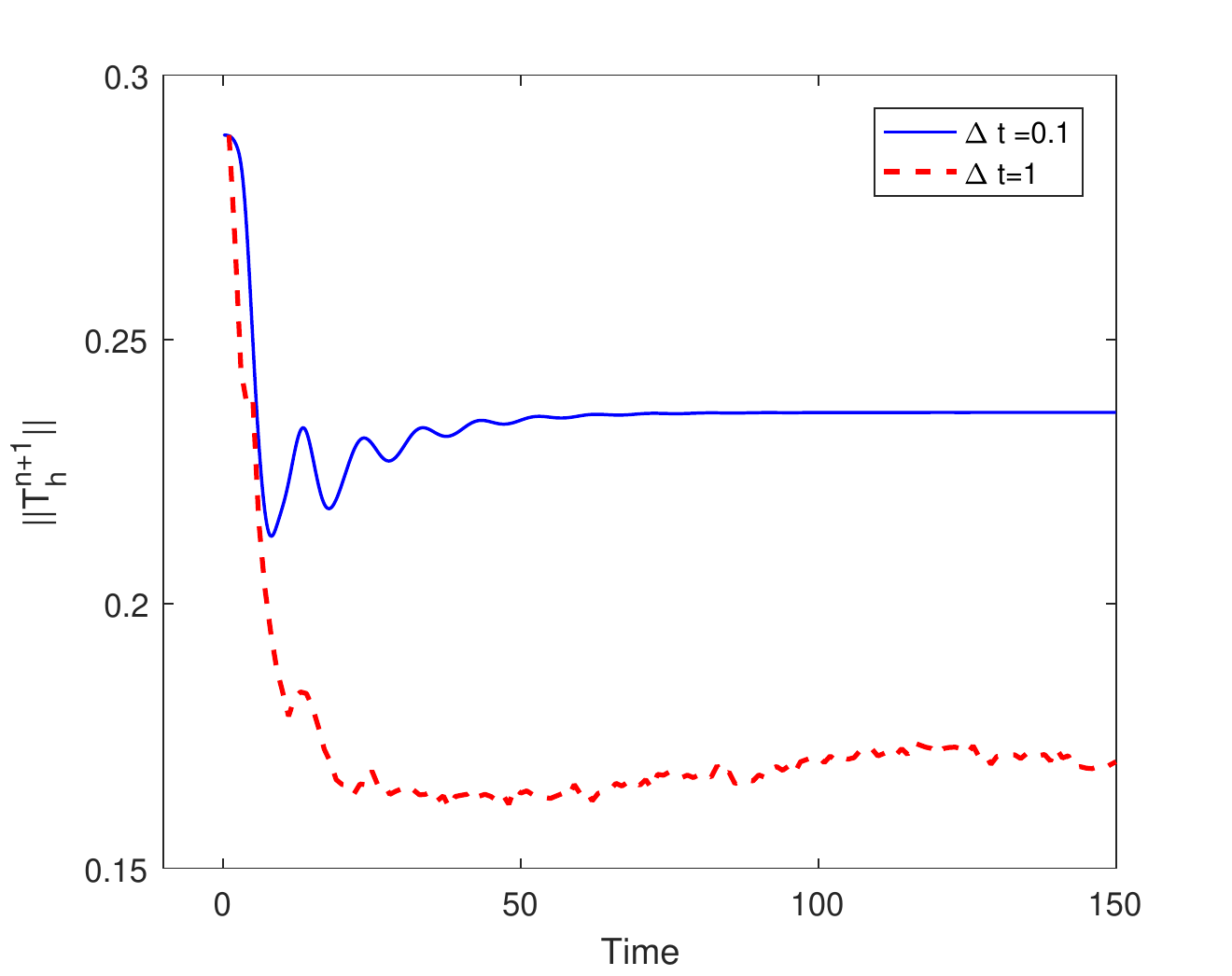}	
	}}
	\caption{\label{fig:nc} Evolution of the $L^{2}$ norm of the solution for varying viscosities. Velocity for $\nu = 0.01,\,\,\nu = 0.001$ (upper left to right) and  temperature for $\nu = 0.01,\,\,\nu = 0.001$ (lower left to right). }
\end{figure}


A clear observation from  Figure \ref{fig:nc}, both velocity and temperature solutions are globally in time bounded for varying viscosity instances. As we expect, due to the nature of these flows, narrow oscillation intervals could be observed for smaller $\nu$ values but still we can conclude that the solutions are all long time stable as theory predicts.

Similar to the Navier-Stokes case, we present the comparison table of CPU times in order to feel the computational advantage of the BLEBDF in Table \ref{table:cpunc}. Since the degree of freedom of overall system has been increased in natural convection case, we observe more visible CPU time differences in our comparison with the classical BDF2 scheme. Also we have used a rather short time interval for this test case. One could conclude from this comparison that, using the BLEBDF scheme instead of BDF2 scheme will yield a computational time advantage when smaller time step sizes are used on longer time intervals.

\begin{table}[hh!]
\centering
\begin{tabular}{|c c c|}
\hline
  $\Delta t$ & BDF2 & BLEBDF \\ \hline\hline
  1 & 6.71 & 6.27 \\
  0.1 & 61.8 & 59 \\
  0.01 & 628& 610\\
  \hline
\end{tabular}
\caption{ Comparison of the CPU-times (seconds) of classical BDF2 scheme and BLEBDF for natural convection with $\nu=0.001$ on a time interval $[0,10]$.}
\label{table:cpunc}
\end{table}

\section{Long time stability of double-diffusive convection with BLEBDF}

Under the assumption of Boussinesq approximation, we consider a fluid flow in $\Omega$  with a polygonal boundary $\partial\Omega= \Gamma_T\cup\Gamma_B $ with $\Gamma_T\cap\Gamma_B = \emptyset $. The governing equations of double-diffusive convection known as also Darcy-Brinkman system are given by (see e.g. \cite{goy}),
\begin{eqnarray}\label{dnse}
 \begin {array}{rcll}
\bu_t -\nu \Delta \bu+ (\bu\cdot\nabla)\bu +Da^{-1}\bu+ \nabla p &=& (\beta_T T + \beta_C C){\bf g} + \bff  &
 \mathrm{in }\ \Omega, \\
  \nabla \cdot \bu&=& 0& \mathrm{in }\ \Omega,\\
 T_t-\gamma \Delta T+\bu\cdot \nabla T&=& \kappa & \mathrm{in }\  \Omega,\\
 C_t-{D_c}\Delta C+\bu\cdot \nabla C&=&\zeta& \mathrm{in }\   \Omega,\\
\bu(0,\bx)= \bu_0,\ T(0,\bx)=  T_0 , \ C(0,\bx) &=& C_0 & \mathrm{in }\ \Omega, \\
\bu= \mathbf{0} \ \mathrm{on }\ \partial \Omega,\ T,C= 0\, \ \mathrm{on}\ \Gamma_T,  \, \dfrac{\partial T}{\partial {\bf n}}=0, \dfrac{\partial C}{\partial {\bf n}}&=&0& \mathrm{on} \ \Gamma_B.
 \end{array}
\end{eqnarray}
Besides the parameters defined earlier, $C$ is the concentration, $C_0$ the initial fluid concentration and $\zeta$  the body force for concentration equation. We also have the Darcy number $Da$, the thermal conductivity  $\kappa$, the mass diffusivity ${D_c} > 0$, ${\bf g}$ the gravitational acceleration vector, the thermal and solutal expansion coefficients are $\beta_T$, $\beta_C$, respectively. The dimensionless parameters are the buoyancy ratio $N=\dfrac{\beta_C \Delta C}{\beta_T \Delta T}$, the Schmidt number $Sc=\dfrac{\nu}{D_c}$, Prandtl number $Pr=\dfrac{\nu}{\gamma}$, the Darcy number $Da=\dfrac{K}{H^2}$, the Lewis number $Le=\dfrac{Sc}{Pr}$ and the thermal Rayleigh number $Ra=\dfrac{\bfg \beta_T \Delta T H^3}{\nu \gamma }$. Here the cavity height is $H$ and permeability is $K$. $\Delta T$ and $\Delta C$ are the temperature and concentration differences, respectively.

The fully discrete approximation of (\ref{dnse}) based on the BLEBDF scheme reads as; for each time level, we look for approximations $(\bu_h^{n+1},T_h^{n+1},C_h^{n+1})\in (\bfX_h, W_h,\Psi_h)$, satisfying
\begin{eqnarray}\label{ddbl}
\lefteqn{\left(\dfrac{\frac{5}{3}\bu_h^{n+1}-\frac{5}{2}\bu_h^n+\bu_h^{n-1}-\frac{1}{6}\bu_h^{n-2}}{\Delta t},\bv_h\right) + \nu(\nabla{\bu_h^{n+1}},\nabla{\bv_h}) + b_1(\bu^{*},\bu_h^{n+1},\bv_h)}\nonumber\\&&+Da^{-1}(\bu_h^{n+1},\bv_h)=\Big((\beta_{T}T^{*}+ \beta_{C}C^{*} ){\bf g}, \bv_h\Big) + (\bff^{n+1}, \bv_h) , \label{mnud}\\[15pt]
\lefteqn{\left(\dfrac{\frac{5}{3}T_h^{n+1}-\frac{5}{2}T_h^n+T_h^{n-1}-\frac{1}{6}T_h^{n-2}}{\Delta t},S_h\right)+b_2(\bu^{*},T_h^{n+1},S_h)+ \kappa(\nabla T_h^{n+1}, \nabla S_h)}\nonumber\\&=&(\gamma^{n+1}, S_h),\label{mntd}\\[15pt]
\lefteqn{\left(\dfrac{\frac{5}{3}C_h^{n+1}-\frac{5}{2}C_h^n+C_h^{n-1}-\frac{1}{6}C_h^{n-2}}{\Delta t},\phi_h\right)+b_3(\bu^{*},C_h^{n+1},\phi_h)+ D_c(\nabla C_h^{n+1}, \nabla \phi_h)}\nonumber\\
&=&(\zeta^{n+1}, \phi_h).\label{mncd}
\end{eqnarray}
for all $(\bv_h^{n+1},S_h^{n+1},\phi_h^{n+1})\in (\bfX_h, W_h,\Psi_h)$ where $\bu^{*}= \bu_h^{n+1} = 3\bu_h^n-3\bu_h^{n-1}+\bu_h^{n-2}$, $T^{*}= T_h^{n+1} = 3T_h^n-3T_h^{n-1}+T_h^{n-2}$ and $C^{*}= C_h^{n+1} = 3C_h^n-3C_h^{n-1}+C_h^{n-2} $

This section is devoted to prove the long time stability of the solutions of (\ref{mnud})-(\ref{mncd}).

\begin{theorem}(Unconditional long time stability of double-diffusive convection in $L^2$) \label{Lem:stamd} Assume $\bff \in {L^{\infty}(\mathbb{R}_+,\bfV_h^*)}$, $\gamma \in {L^{\infty}(\mathbb{R}_+,W_h^*)}$ and $\zeta \in {L^{\infty}(\mathbb{R}_+,\Psi_h^*)}$, then the approximation (\ref{mnud})-(\ref{mncd}) is long time stable in the following sense: for any $\Delta t>0$,

	\begin{eqnarray}
\lefteqn{	\|{ \bf \mathcal{U}}_{n+1}\|_G^2	+\|{ \bf \mathcal{T}}_{n+1}\|_G^2+\|{ \bf \mathcal{C}}_{n+1}\|_G^2+\dfrac{\nu\Delta t}{4}\|\nabla{\bu_h^{n+1}}\|^2+\dfrac{\kappa\Delta t}{4}\|\nabla{T_h^{n+1}}\|^2+\dfrac{D_c\Delta t}{4}\|\nabla{C_h^{n+1}}\|^2}	\nonumber\\
&&+\dfrac{Da^{-1}\Delta t}{4}\|{\bu_h^{n+1}}\|^2\nonumber\\
 &\leq&(1+\alpha)^{-(n+1)}\Big(\|{ \bf \mathcal{U}}_{0}\|_G^2+\frac{\nu\Delta t}{4}\|\nabla{\bu_h^{0}}\|^2+\frac{\nu\Delta t}{16}\|\nabla{\bu_h^{-1}}\|^2+\dfrac{Da^{-1}\Delta t}{4}\|{\bu_h^{0}}\|^2+\dfrac{Da^{-1}\Delta t}{16}\|{\bu_h^{-1}}\|^2\Big)\nonumber\\
	&&+\Big(K_{T,\alpha}+(1+\beta)^{-1}\Big)\Big((1+\beta)^{-n}(\|{ \bf \mathcal{T}}_{0}\|_G^2+\frac{\kappa\Delta t}{4}\|\nabla{T_h^{0}}\|^2+\frac{\kappa\Delta t}{16}\|\nabla{T_h^{-1}}\|^2)\Big)\nonumber\\
	&&\nonumber\\
	&&+\Big(K_{C,\alpha}+(1+\delta)^{-1}\Big)\Big((1+\delta)^{-n}(\|{ \bf \mathcal{C}}_{0}\|_G^2+\frac{D_c\Delta t}{4}\|\nabla{C_h^{0}}\|^2+\frac{D_c\Delta t}{16}\|\nabla{C_h^{-1}}\|^2)\Big)\nonumber	\\
		&&+(K_{T,\alpha}+1)\max \{ \dfrac{8C_p^2}{C_l\kappa^2}, \dfrac{2\kappa^{-1}\Delta t}{3}\} \|\gamma\|_{L^{\infty}(\mathbb{R}_+,W_h^*)}^2\nonumber\\
	&&+(K_{C,\alpha}+1)\max \{ \dfrac{8C_p^2}{C_lD_c^2}, \dfrac{2D_c^{-1}\Delta t}{3}\} \|\zeta\|_{L^{\infty}(\mathbb{R}_+,\Psi_h^*)}^2\nonumber\\
	&&+ \max \{ \dfrac{8C_p^2}{C_l\nu(\nu+C_p^2Da^{-1})}, \dfrac{2\nu^{-1}\Delta t}{3}\}\|\bff\|_{L^{\infty}(\mathbb{R}_+,\bfV_h^*)}^2 \label{lemmud},
	\end{eqnarray}
	where
	$K_{T,\alpha}= \dfrac{49C_uDa\Delta t \|{\bf g}\|_{\infty}^2\beta_T^2 }{\alpha}$, $K_{C,\alpha}=\dfrac{49C_uDa\Delta t \|{\bf g}\|_{\infty}^2 \beta_C^2}{\alpha}$,
$\mathcal{U}_0=[\bu_h^0 \quad \bu_h^{-1} \quad \bu_h^{-2}]$,  \\ $	\mathcal{T}_0=[T_h^0 \quad T_h^{-1} \quad T_h^{-2}]$,
	$\mathcal{C}_0=[C_h^0 \quad C_h^{-1} \quad C_h^{-2}]$,
	$	\alpha=\min\{\dfrac{C_l(\nu+C_p^2Da^{-1})\Delta t}{16C_p^2},\dfrac{3}{4}\},$
 $\beta=\min\{\dfrac{C_l\kappa \Delta t}{16C_p^2}, \dfrac{3}{4}\}$, $ \delta=\min\{\dfrac{C_lD_c\Delta t}{16C_p^2}, \dfrac{3}{4}\}$.
\end{theorem}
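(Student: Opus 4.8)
The plan is to exploit the one-directional coupling of the system: the temperature equation \eqref{mntd} and the concentration equation \eqref{mncd} feel the velocity only through the skew-symmetric transport terms $b_2,b_3$, which vanish when tested against $T_h^{n+1}$ and $C_h^{n+1}$ respectively. Consequently $T$ and $C$ satisfy \emph{closed} scalar energy recursions, identical in form to the single-scalar estimate already derived for natural convection, while only the momentum equation \eqref{mnud} carries genuine feedback through the buoyancy source $(\beta_T T^*+\beta_C C^*){\bf g}$. I would therefore first bound $T$ and $C$ in closed form, then bound $\bu$ in terms of the $G$-norms of $T$ and $C$, and finally resolve the coupled induction.

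First I would set $S_h=T_h^{n+1}$ in \eqref{mntd}, use $b_2(\bu^*,T_h^{n+1},T_h^{n+1})=0$, apply the $G$-identity \eqref{ey} and multiply by $\Delta t$. Cauchy--Schwarz and Young on $(\gamma^{n+1},T_h^{n+1})$, followed by adding the stabilizing gradient terms and invoking Poincar\'e--Friedrichs together with the norm equivalence \eqref{eqv}, extracts the contraction factor $\beta$ exactly as in \eqref{blb}. Induction then yields a bound of precisely the form \eqref{bl5tn}: geometric decay $(1+\beta)^{-(n+1)}$ of the initial temperature energy plus the forcing plateau $\tfrac{\kappa^{-1}\Delta t}{2\beta}\|\gamma\|^2_{L^\infty(\mathbb{R}_+,W_h^*)}$. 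Repeating verbatim with $\phi_h=C_h^{n+1}$, $b_3$, $D_c$, $\zeta$, $\delta$ gives the analogous closed-form bound for $C$ with decay rate $\delta$.

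Next I would test \eqref{mnud} with $\bv_h=\bu_h^{n+1}$ and \eqref{mnup} with $q_h=p_h^{n+1}$. The transport term drops by skew-symmetry, the pressure term drops by the discrete divergence constraint, and the Darcy term supplies an extra damping $Da^{-1}\|\bu_h^{n+1}\|^2$. The one new estimate is the buoyancy source: I would split it as $\beta_T(T^*{\bf g},\bu_h^{n+1})+\beta_C(C^*{\bf g},\bu_h^{n+1})$, bound each factor by $\|{\bf g}\|_\infty\|T^*\|\,\|\bu_h^{n+1}\|$ (resp.\ with $C^*$), and apply Young's inequality so that the $\|\bu_h^{n+1}\|^2$ contributions are absorbed \emph{by the Darcy damping} rather than by viscosity. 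This is the source of the factor $Da$ in $K_{T,\alpha},K_{C,\alpha}$ and of the combined effective dissipation $\nu+C_p^2Da^{-1}$ appearing in $\alpha$. The residual $\|T^*\|^2$ is controlled by $\|T^*\|\le 7\|\mathcal{T}_n\|\le 7\sqrt{C_u}\,\|\mathcal{T}_n\|_G$ (and likewise for $C^*$), turning the buoyancy contribution into sources proportional to $\|\mathcal{T}_n\|_G^2$ and $\|\mathcal{C}_n\|_G^2$. Adding the stabilizing velocity gradient terms and using Poincar\'e plus \eqref{eqv}, the viscous and Darcy terms together produce the contraction $(1+\alpha)$, giving a velocity recursion of the shape of \eqref{blu3}.

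The final and most delicate step is the coupled induction. Substituting the closed-form $T$ and $C$ bounds into the velocity recursion reduces it to a scalar inequality $E^u_{n+1}\le (1+\alpha)^{-1}E^u_n+(1+\alpha)^{-1}\big[a\,(1+\beta)^{-n}+b\,(1+\delta)^{-n}+c\big]$ with explicit constants. The obstacle is that iterating this produces \emph{mixed} geometric sums such as $\sum_{k=0}^{n}(1+\alpha)^{-(n-k)}(1+\beta)^{-k}$, which must be bounded uniformly in $n$ by a constant multiple of the slower-decaying factor; it is exactly this summation that generates the $\alpha^{-1}$ in $K_{T,\alpha}=\tfrac{49C_uDa\Delta t\|{\bf g}\|_\infty^2\beta_T^2}{\alpha}$ (similarly $K_{C,\alpha}$) and the aggregated coefficients $(K_{T,\alpha}+(1+\beta)^{-1})$ and $(K_{C,\alpha}+(1+\delta)^{-1})$ in the statement. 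Having summed these series, I would add the three energy bounds for $\bu$, $T$, $C$ together to recover the stated inequality \eqref{lemmud}.
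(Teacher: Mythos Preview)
Your proposal is correct and follows essentially the same route as the paper: first derive closed scalar bounds for $T$ and $C$ via the argument of \eqref{bl5tn}, then test \eqref{mnud} with $\bu_h^{n+1}$ and absorb the buoyancy source into the Darcy damping (yielding the $Da$ factor in $K_{T,\alpha},K_{C,\alpha}$ and the combined coefficient $\nu+C_p^2Da^{-1}$ in $\alpha$), bound $\|T^*\|,\|C^*\|$ by $7\sqrt{C_u}\|\mathcal{T}_n\|_G,7\sqrt{C_u}\|\mathcal{C}_n\|_G$, substitute the scalar bounds, apply induction, and sum the three energy estimates. One minor remark: there is no separate pressure equation ``\eqref{mnup}'' in the double-diffusive scheme as written, but this does not affect your argument.
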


\begin{proof}
Arguing in the same way as for (\ref{bl5tn}), letting $S_h=T_h^{n+1}$ in (\ref{mntd}) and $\phi_h=C_h^{n+1}$ in (\ref{mncd}) gives  	 
\begin{eqnarray}
\lefteqn{	\|{ \bf \mathcal{T}}_{n+1}\|_G^2	 +\dfrac{\kappa\Delta t}{4}\|\nabla{T_h^{n+1}}\|^2+\dfrac{\kappa\Delta t}{16}\|\nabla{T_h^{n}}\|^2}\nonumber
	\\
&&	\leq(1+\beta)^{-(n+1)}(\|{ \bf \mathcal{T}}_{0}\|_G^2+\dfrac{\kappa\Delta t}{4}\|\nabla{T_h^{0}}\|^2
	+\dfrac{\kappa\Delta t}{16}\|\nabla{T_h^{-1}}\|^2)+\dfrac{\kappa^{-1}\Delta t}{2\beta} \|\gamma\|_{L^{\infty}(\mathbb{R}_+,W_h^*)}^2.\label{bl5td}
	\end{eqnarray}
for the temperature	where $\beta=min\{\dfrac{C_l\kappa \Delta t}{16C_p^2}, \dfrac{3}{4}\}$ and
	\begin{eqnarray}
\lefteqn{	\|{ \bf \mathcal{C}}_{n+1}\|_G^2	 +\dfrac{D_c\Delta t}{4}\|\nabla{C_h^{n+1}}\|^2+\dfrac{D_c\Delta t}{16}\|\nabla{C_h^{n}}\|^2}\nonumber
	\\
	&\leq&(1+\delta)^{-(n+1)}(\|{ \bf \mathcal{C}}_{0}\|_G^2	+\dfrac{D_c\Delta t}{4}\|\nabla{C_h^{0}}\|^2
	+\dfrac{D_c\Delta t}{16}\|\nabla{C_h^{-1}}\|^2)+\dfrac{D_c^{-1}\Delta t}{2\delta} \|\zeta\|_{L^{\infty}(\mathbb{R}_+,\Psi_h^*)}^2\label{bl5c}.
	\end{eqnarray}
for the concentration	where $\delta=\min\{\dfrac{C_lD_c \Delta t}{16C_p^2},\dfrac{3}{4}\}$ and  ${ \bf \mathcal{C}}_{n+1}=[C_h^{n+1} \quad C_h^{n} \quad C_h^{n-1}]^\top$ and ${ \bf \mathcal{C}}_{n}=[C_h^{n} \quad C_h^{n-1} \quad C_h^{n-2}]^\top$. Next, a bound for the velocity begins with letting $\bv_h=\bu_h^{n+1}$ in (\ref{mnud}), utilizing the similar ideas, one finds
	\begin{gather}
\|{ \bf \mathcal{U}}_{n+1}\|_G^2-\|{ \bf \mathcal{U}}_{n}\|_G^2+\dfrac{1}{12}\|\bu_h^{n+1}-3\bu_h^{n}+3\bu_h^{n-1}-\bu_h^{n-2}\|^2	
+\dfrac{\nu\Delta t}{2}\|\nabla{\bu_h^{n+1}}\|^2+\frac{Da^{-1}\Delta t}{2}\|\bu_h^{n+1}\|^2\nonumber\\
	\leq Da\Delta t\|{\bf g}\|_{\infty}^2(\beta_{T}^2\|T^{*}\|^2+ \beta_{C}^2\|C^{*}\|^2) + \frac{\nu^{-1} \Delta t}{2}\|\bff^{n+1}\|_{\bfV_h^*}^2
	\label{blu1d}
	\end{gather}
As for (\ref{blu2}), adding both of side $\dfrac{\nu\Delta t}{4}\|\nabla{\bu_h^{n}}\|^2+\dfrac{\nu\Delta t}{16}\|\nabla{\bu_h^{n-1}}\|^2+\dfrac{Da^{-1}\Delta t}{4}\|{\bu_h^{n}}\|^2+\dfrac{Da^{-1}\Delta t}{16}\|{\bu_h^{n-1}}\|^2$ and arguing exactly in the same way, the  (\ref{blu1d}) yields
	\begin{eqnarray}
\lefteqn{	(1+\alpha)(\|{ \bf \mathcal{U}}_{n+1}\|_G^2	 +\dfrac{\nu\Delta t}{4}\|\nabla{\bu_h^{n+1}}\|^2+\dfrac{\nu\Delta t}{16}\|\nabla{\bu_h^{n}}\|^2+\dfrac{Da^{-1}\Delta t}{4}\|{\bu_h^{n+1}}\|^2+\dfrac{Da^{-1}\Delta t}{16}\|{\bu_h^{n}}\|^2)}\nonumber
\\
&\leq&(\|{ \bf \mathcal{U}}_{n}\|_G^2+\dfrac{\nu\Delta t}{4}\|\nabla{\bu_h^{n}}\|^2
	+\dfrac{\nu\Delta t}{16}\|\nabla{\bu_h^{n-1}}\|^2+\dfrac{Da^{-1}\Delta t}{4}\|{\bu_h^{n}}\|^2+\dfrac{Da^{-1}\Delta t}{16}\|{\bu_h^{n-1}}\|^2) \nonumber
	\\
	&&+Da\Delta t \|{\bf g}\|_{\infty}^2(\beta_{T}^2\|T^{*}\|^2+ \beta_{C}^2\|C^{*}\|^2) + \frac{\nu^{-1} \Delta t}{2}\|\bff^{n+1}\|_{\bfV_h^*}^2 \label{blu2d}
	\end{eqnarray}
where $	\alpha=\min\{\dfrac{\Delta tC_l}{16}(\dfrac{\nu}{C_p^2}+Da^{-1}),\dfrac{3}{4}\}$. Utilizing (\ref{eqv}), and the definitions of $\mathcal{T}^*$,  $\mathcal{C^*}$, we get $\|\mathcal{T}^*\|\leq 7\|\mathcal{T}_n\|\leq 7\sqrt{C_u}\|\mathcal{T}_n\|_G$ and $\|\mathcal{C}^*\|\leq 7\|\mathcal{C}_n\|\leq 7\sqrt{C_u}\|\mathcal{C}_n\|_G$.
Putting $n$ instead of $(n+1)$ in (\ref{bl5td}) and (\ref{bl5c}) and inserting them in (\ref{blu2d}) yields
	\begin{eqnarray}
\lefteqn{	(1+\alpha)(\|{ \bf \mathcal{U}}_{n+1}\|_G^2	 +\dfrac{\nu\Delta t}{4}\|\nabla{\bu_h^{n+1}}\|^2+\dfrac{\nu\Delta t}{16}\|\nabla{\bu_h^{n}}\|^2+\dfrac{Da^{-1}\Delta t}{4}\|{\bu_h^{n+1}}\|^2+\dfrac{Da^{-1}\Delta t}{16}\|{\bu_h^{n}}\|^2)}\nonumber
\\
&\leq&(\|{ \bf \mathcal{U}}_{n}\|_G^2+\dfrac{\nu\Delta t}{4}\|\nabla{\bu_h^{n}}\|^2
+\dfrac{\nu\Delta t}{16}\|\nabla{\bu_h^{n-1}}\|^2+\dfrac{Da^{-1}\Delta t}{4}\|{\bu_h^{n}}\|^2+\dfrac{Da^{-1}\Delta t}{16}\|{\bu_h^{n-1}}\|^2) \nonumber
\\
&&+49C_uDa\Delta t \|{\bf g}\|_{\infty}^2 \bigg( \beta_T^2\Big( (1+\beta)^{-n}(\|{ \bf \mathcal{T}}_{0}\|_G^2+\dfrac{\kappa\Delta t}{4}\|\nabla{T_h^{0}}\|^2
+\dfrac{\kappa\Delta t}{16}\|\nabla{T_h^{-1}}\|^2)
+\dfrac{\kappa^{-1}\Delta t}{2\beta} \|\gamma\|_{L^{\infty}(\mathbb{R}_+,W_h^*)}^2\Big)\nonumber
\\
&&+\beta_C^2\Big((1+\delta)^{-n}(\|{ \bf \mathcal{C}}_{0}\|_G^2
	+\dfrac{D_c\Delta t}{4}\|\nabla{C_h^{0}}\|^2+\dfrac{D_c\Delta t}{16}\|\nabla{C_h^{-1}}\|^2)+\dfrac{D_c^{-1}\Delta t}{2\delta} \|\zeta\|_{L^{\infty}(\mathbb{R}_+,\Psi_h^*)}^2\Big)\bigg) \nonumber
	\\
	&&+ \frac{\nu^{-1} \Delta t}{2}\|\bff^{n+1}\|_{\bfV_h^*}^2 \label{blu3d}
	\end{eqnarray}
	Applying induction 
	and adding (\ref{blu3d}) with (\ref{bl5td}) and (\ref{bl5c}) gives stated result (\ref{lemmud}).
	
\end{proof}

\subsection{Numerical experiment for double-diffusive convection}

Among all the multiphysics flow examples issued in this study, the most challenging kind of flow is the double-diffusive convection case due to the highly oscillatory nature of its solutions \cite{goy,qin,R15}. As the Rayleigh number increases,  complex behavior of the solutions becomes more visible. However, we could still show the long time stability of the solutions of this system with the moderate Rayleigh number of $1000$ in a cavity convection case with the problem parameters, $N=0.8,\, Le=2,\, Pr=1$. We study in a rectangular domain $\Omega=(0,1)\times (0,2)$ with a coarse mesh resolution of $10 \times 20$. We note that, in the test, the Darcy number $Da$ is taken to be infinity for convenience.

Similar to the the previous test, vertical boundaries are kept at different temperature and concentration values imposed as Dirichlet boundary conditions whereas, the velocity boundary conditions are still no-slip all over the boundary. For a better understanding , we illustrate the details of the computational domain in Figure \ref{fig:dbdomain} as in previous case.

\begin{figure}[htb]
\centerline{\hbox{
	\includegraphics[width=0.45\linewidth]{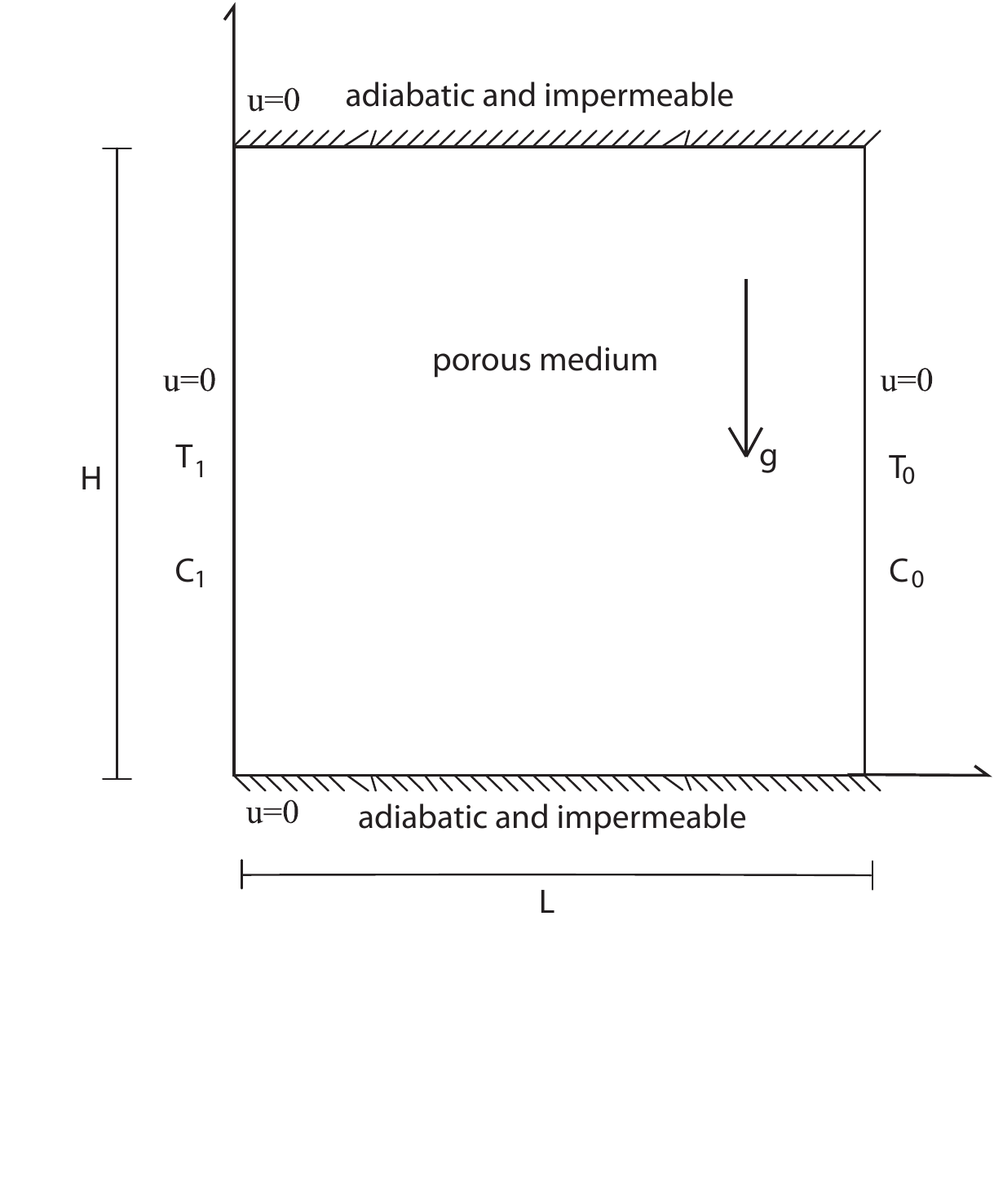}
}}
\caption{\label{fig:dbdomain} The computational domain for the double-diffusive convection test example.}
\end{figure}
\begin{figure}[h!]
	\centerline{\hbox{
			\includegraphics[width=0.35\linewidth]{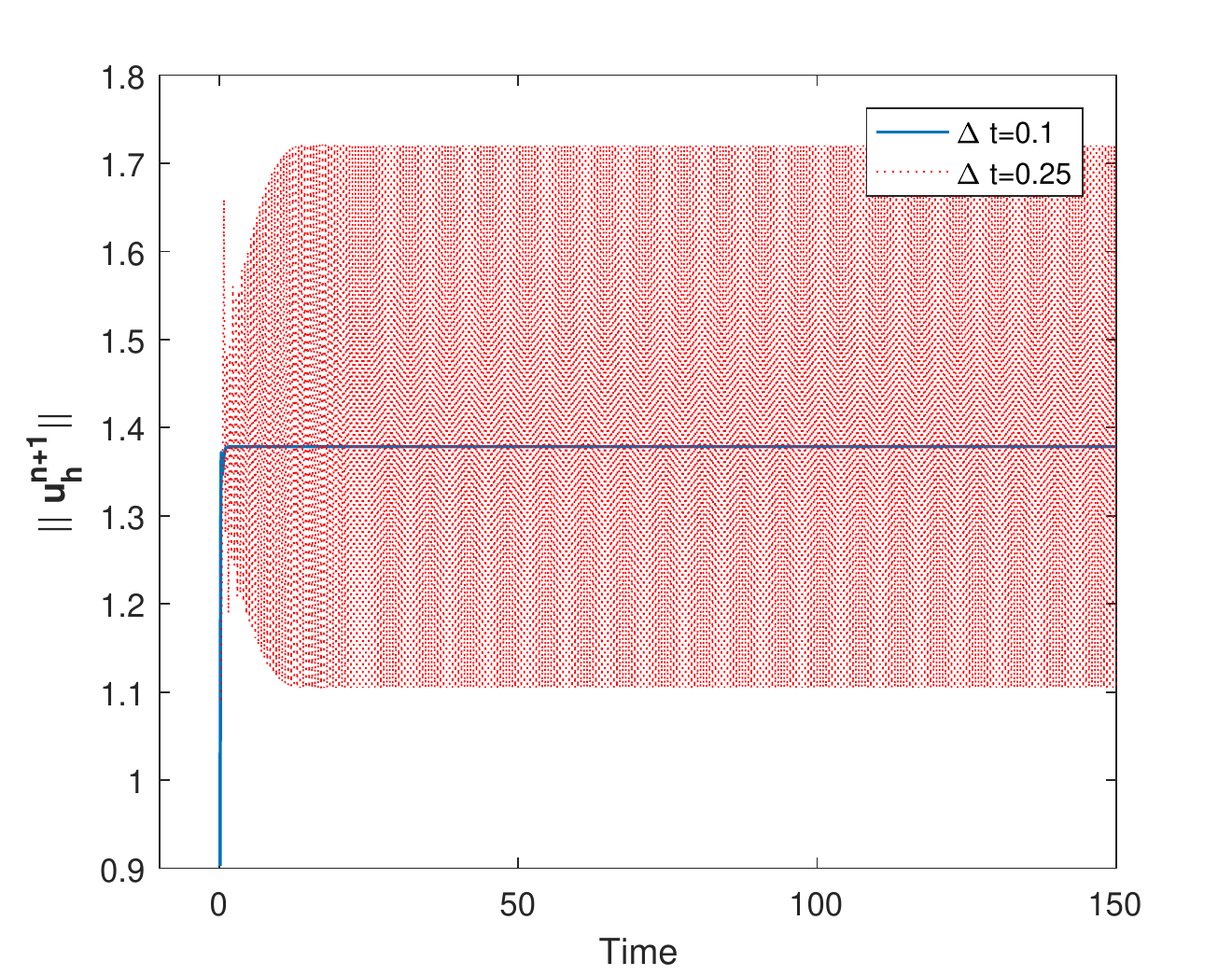}
			\includegraphics[width=0.35\linewidth]{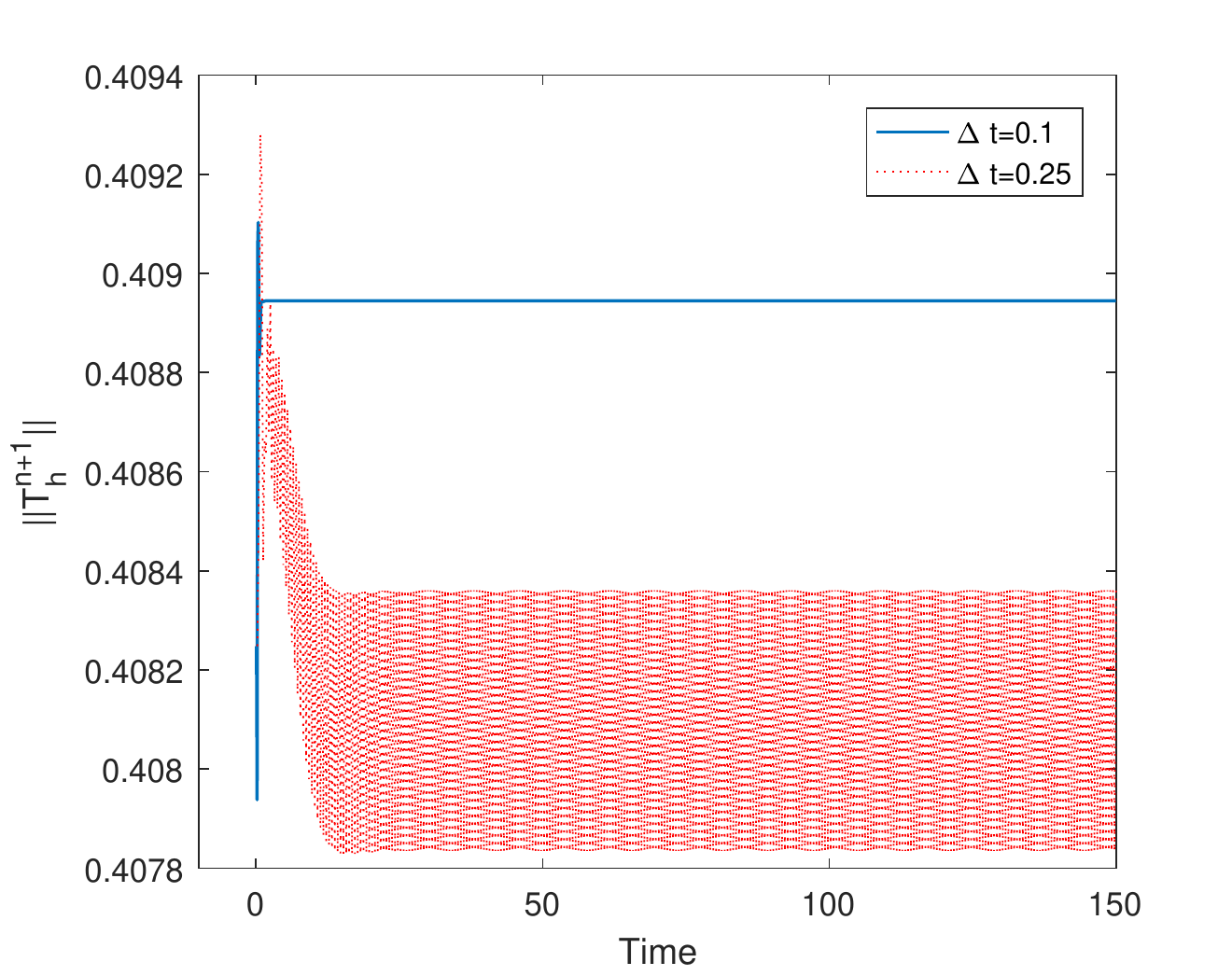}		
          	\includegraphics[width=0.35\linewidth]{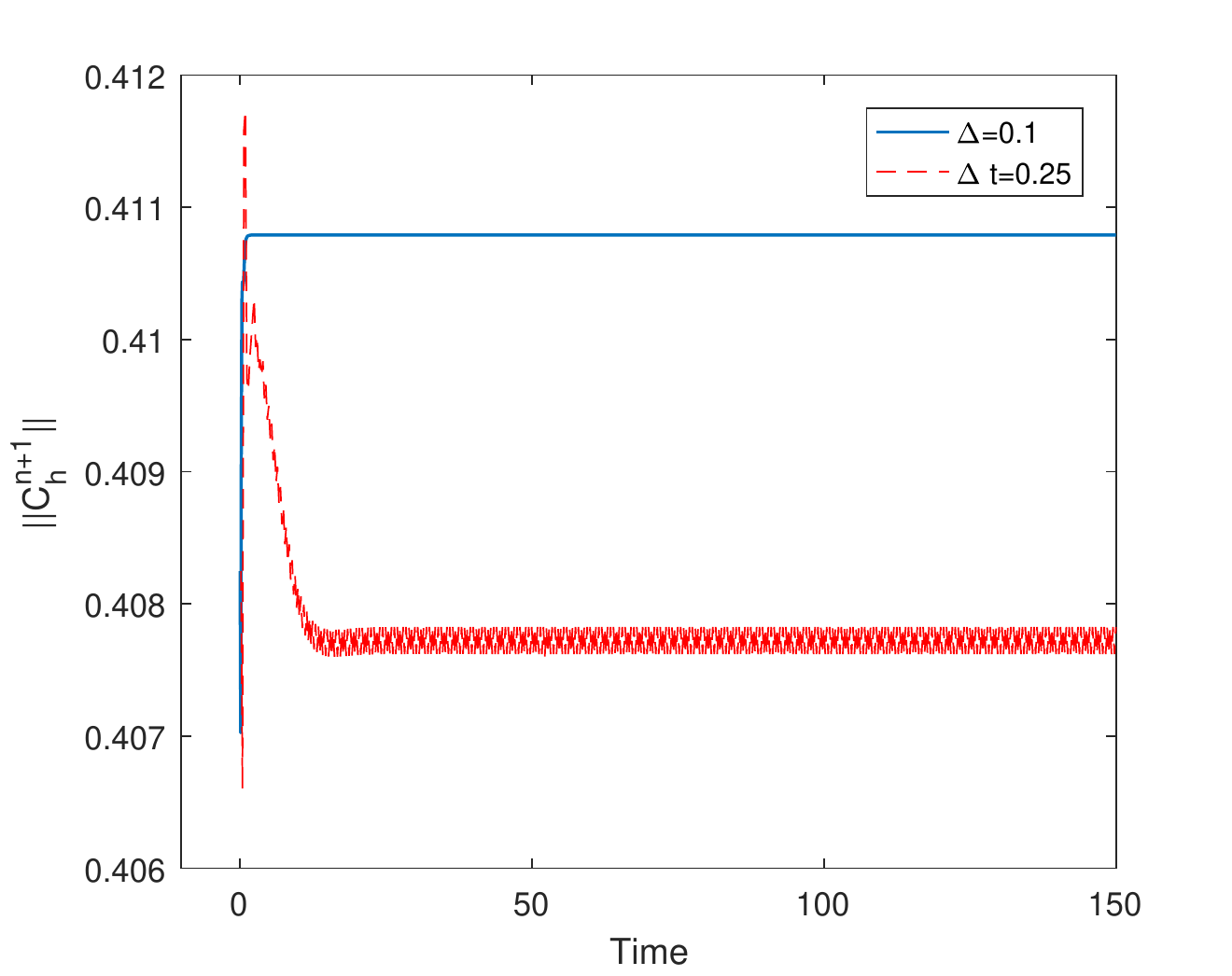}
          }}
	\caption{\label{fig:db} Evolution of the $L^{2}$ norm of the solution for $Ra=10^3,\, Le=2,\, Pr=1,\, N=0.8$. }
\end{figure}
In Figure \ref{fig:db}, the evolutions of the norms of each variable are illustrated. It can be observed that the scheme results to for larger $\Delta t$ values, solutions are stable inside an interval due to the complex attitude of the solution as explained earlier. However, the unconditional stability property of the solutions have been presented through this example.

In Table \ref{table:cpudb}, we present the CPU time comparison table for the test problem. The most visible CPU time differences are occurred in this case due to the increment in the degrees of freedom. As one more equation has been coupled to the system compared with the previous test case, we feel the superiority of the BLEBDF scheme when compared to the classical BDF2 scheme especially for smaller $\Delta t$. So the BLEBDF is clearly preferable when solving coupling systems with small time step sizes on longer time intervals.
\begin{table}[hh!]
\centering
\begin{tabular}{|c c c|}
\hline
  $\Delta t$ & BDF2 & BLEBDF\\ \hline\hline
  1 & 9.41 & 9 \\
  0.1 & 92 & 88 \\
  0.01 & 1075& 860\\
  \hline
\end{tabular}
\caption{ Comparison of the CPU-times (seconds) of classical BDF2 scheme and BLEBDF scheme for the double-diffusive convection test problem with $Ra=10^3$ on a time interval $[0,10]$.}
\label{table:cpudb}
\end{table}

\section{Conclusion}
This study deals with the long time stability of multiphysics flow problems including the Navier-Stokes equations, natural convection and double-diffusive convection systems with BLEBDF temporal discretization along with the finite element method in spatial discretization. Unconditional stability of the schemes has been proven and theoretical results are supported with various numerical examples. Computed CPU times suggest that this method noticeably saves time compared with the
classical BDF2 scheme for small time step sizes. These numerical experiments establish the strong evidence of the long time stability results of multiphysics flows presented in this study.

\bibliographystyle{amsplain}
\bibliography{reference}
\end{document}